\newtheorem{thm}{Theorem}[section]
\newtheorem{proposition}[thm]{Proposition}
\newtheorem{lemma}[thm]{Lemma}
\newtheorem{corollary}[thm]{Corollary}
\theoremstyle{definition}
\newtheorem{definition}[thm]{Definition}
\newtheorem{example}[thm]{Example}
\DeclareMathOperator{\rank}{rank}%
\DeclareMathOperator{\codim}{codim}%
\newcommand{\cunita}{{\scriptstyle\wedge}}
\newcommand{\cuna}{\mathchoice{{\textstyle\wedge}}%
    {{\wedge}}%
    {{\textstyle\wedge}}%
    {{\scriptstyle\wedge}}}
\begin{document}

\markboth{J. Carrillo-Pacheco and F. Zaldivar}{Codes on Linear Sections of Grassmannians}
\title{Codes on Linear Sections of Grassmannians}
\author{Jes\'us Carrillo-Pacheco}
\address{Academia de Matem\'aticas \\
Universidad Aut\'onoma de la Ciudad de M\'exico\\
09790, M\'exico, D. F., M\'exico.}
\email{jesus.carrillo@uacm.edu.mx}

\author{ Felipe Zaldivar}
\address{Departamento de Matem\'aticas \\
Universidad Aut\'onoma Metropolitana-I\\
09340, M\'exico, D. F., M\'exico}
 \email{fz@xanum.uam.mx}


\begin{abstract}
We study algebraic geometry linear codes defined by linear sections of the Grassmannian variety as  codes associated to FFN$(1,q)$-projective varieties. As a consequence, we show that
Schubert, Lagran\-gian-Grassmannian, and isotropic Grassmannian codes
are special instances of codes defined by linear sections of the Grassmannian variety.
\end{abstract}

\keywords{Algebraic geometry codes; Grassmann codes;
Lagrangian-Grassmannian codes;  Schubert codes;  Higher
weights}

\subjclass[2010]{Primary: 11T71; Secondary: 94Bxx}

\maketitle

\section{Introduction}\label{sec1}

Let ${\mathbb F}_q$ be a finite field with $q$ elements, and denote
by $\overline{\mathbb F}_q$ an algebraic closure of ${\mathbb F}_q$. For $E$ a vector space over ${\mathbb F}_q$ of finite dimensiom $k$, let $\overline{E}=E\otimes_{{\mathbb F}_q}\overline{\mathbb F}_q$ the corresponding vector space over the algebraically closed field $\overline{\mathbb F}_q$. We will be considering algebraic varieties in the projective space ${\mathbb P}(\overline{E})={\mathbb P}^{k-1}(\overline{\mathbb
F}_q)$. Recall now that a projective variety $X\subseteq{\mathbb P}^{k-1}(\overline{\mathbb
F}_q)$ is defined over the finite field ${\mathbb F}_q$ if its
vanishing ideal can be generated by polynomials with coefficients in
${\mathbb F}_q$. Also, a projective variety $X\subseteq {\mathbb
P}^{k-1}(\overline{\mathbb F}_q)$ is \textit{non-degenerate} if $X$ is
not contained in a hyperplane of ${\mathbb P}^{k-1}(\overline{\mathbb
F}_q)$. For a projective variety $X\subseteq {\mathbb
P}^{k-1}(\overline{\mathbb F}_q)$ defined over ${\mathbb F}_q$, we denote by $X({\mathbb F}_q)$ its set of ${\mathbb F}_q$-rational points.

The arithmetic counterpart of these geometric concepts is the notion of \textit{non-degenerate projective system}, that is, set of points
$\chi\subseteq {\mathbb P}^{k-1}({\mathbb F}_q)$ not contained in
a hyperplane of ${\mathbb P}^{k-1}({\mathbb F}_q)$. The question of when a non-degenerate projective variety $X\subseteq {\mathbb P}^{k-1}(\overline{\mathbb F}_q)$ descends to a non-degenerate projective system $X({\mathbb F}_q)\subseteq{\mathbb P}^{k-1}({\mathbb F}_q)$ is captured by the so-called FFN$(1,q)$-property \cite{0}, \cite{3.1}, that is,  projective varieties $X\subseteq {\mathbb P}^{k-1}(\overline{\mathbb F}_q)$ that satisfy that
every homogeneous linear polynomial  with coefficients in ${\mathbb F}_q$ that vanishes on its set of
${\mathbb F}_q$-rational points $X({\mathbb F}_q)$, also vanishes on
the whole $X\subseteq{\mathbb P}^{k-1}(\overline{\mathbb F}_q)$. These varieties are important in  coding theory, since
by \cite{23} and \cite{30}, to their sets of ${\mathbb F}_q$-rational points $X({\mathbb F}_q)\subseteq{\mathbb P}^{k-1}({\mathbb F}_q)$  there is associated  a non-degenerate $[n,k]_q$-linear code $C_{X({\mathbb F}_q)}\subseteq{\mathbb F}_q^n$ of  length $n=|X({\mathbb F}_q)|$, dimension $k$, and minimum distance 
$$d=d(C_{X({\mathbb F}_q)})=n-\max\{\left|X({\mathbb F}_q)\cap H\right|\; :\; H\; \text{is a hyperplane of ${\mathbb P}^{k-1}({\mathbb F}_q)$}\}.$$
Moreover, the higher weights $d_r=d_r(C_{X({\mathbb F}_q)})$ of $C_{X({\mathbb F}_q)}$ are given by
$$d_r=n-\max\{|X({\mathbb F}_q)\cap H| : H\subseteq{\mathbb P}^{k-1}({\mathbb F}_q)\; \text{a codimension $r$ projective subspace}\}.$$

There are several families of projective algebraic varieties $X$, defined over a finite field ${\mathbb F}_q$ such that the set of ${\mathbb F}_q$-rational points $X({\mathbb F}_q)\subseteq {\mathbb P}^{k-1}({\mathbb F}_q)$ is a non-degenerate system, \cite{3.1}. Here we will be interested on Grassmann varietes and some of their subvarietes. Recall that if $E$ is a vector space of dimension $m$, defined over ${\mathbb F}_q$,  and $\overline{E}=E\otimes_{{\mathbb F}_q}\overline{\mathbb F}_q$,  the Grassmann variety $G(\ell,\overline{E})=G(\ell,m)$ is the set of all vector subspaces of $\overline{E}$ of dimension $\ell$. The Pl\"ucker embedding of $G(\ell,m)$ into the projective space ${\mathbb P}(\cuna^{\ell}\overline{E})={\mathbb P}^{k-1}(\overline{\mathbb F}_q)$, for $k=\binom{m}{\ell}$ is non-degenerate. 
Moreover, the set of ${\mathbb F}_q$-rational points $G(\ell,m)({\mathbb F}_q)$ of the Grassmannian is a non-degenerate projective system in ${\mathbb P}^{k-1}({\mathbb F}_q)$, see \cite{0.1}. Hence, it defines an $[n,k,d]_q$-linear code, where
$n=\big|G(\ell,m)({\mathbb F}_q)\big|=\left[\begin{smallmatrix}
m\\
\ell
\end{smallmatrix}\right]_q$ (the Gaussian $q$-binomial coefficient), 
$k=\binom{m}{\ell}$ and $d=q^{\delta}$, for $\delta=\ell(m-\ell)$, see \cite{19} and \cite{20}.

The main contribution of this paper is to give a uniform construction of  codes associated to linear sections of the Grassmann variety as codes given by FFN$(1,q)$-projective varieties. As a by-product, we show that the
Schubert codes of Ghorpade and Lachaud \cite{5}, Lagrangian-Grassmannian codes of the authors  \cite{3}, and the isotropic Grassmannian codes of  Cardinali and Giuzzi in \cite{2.1}, are instances of codes associated to linear sections of the Grassmannian.

The paper is organized as follows:  In Section \ref{sec2} we establish some general facts on linear sections of Grassmannians, where the main results are Propositions \ref{prop2.1},  \ref{prop2.2}, and  \ref{prop2.3}. Section \ref{sec3} reinterprets and introduces examples of algebraic-geometry codes as codes associated to linear sections of Grassmannians   in the light of the results of Section \ref{sec2}. In Section \ref{sec4} we obtain general bounds for the higher weights of these codes.

\section{Preliminaries and Linear Sections of Grassmannians}\label{sec2}

Let $X\subseteq {\mathbb P}(\overline{E})$ be an irreducible projective variety, where $E$ is a vector space of finite dimension over  a finite  field ${\mathbb F}_q$. Let $\overline{\mathbb F}_q$ be an algebraic closure of ${\mathbb F}_q$ and $\overline{E}=E\otimes_{{\mathbb F}_q}\overline{\mathbb F}_q$. Let $\overline{K}=\{h\in \overline{E}^*: h(x)=0\; \text{for all} \; x \in X \}$, where $\overline{E}^*$ is the dual space of $\overline{E}$. Let $L(X)=\bigcap_{h\in K}\ker h$ be the linear hull of $X$, that is, the smallest linear subspace of ${\mathbb P}(\overline{E})$ that contains $X$. Thus, if
$\overline{V}=\{x\in \overline{E} :h(x)=0\;\text{for all}\; h\in K\}$, 
then, $L(X)={\mathbb P}(\overline{V})$. For the ring extension ${\mathbb F}_q[x_0,\ldots,x_N]\subseteq \overline{\mathbb F}_q[x_0,\ldots, x_N]$, if $I\subseteq {\mathbb F}[x_0,\ldots,x_N]$ is an ideal, we denote by $I_{\overline{\mathbb F}_q}$ its extension to $\overline{\mathbb F}_q[x_0,\ldots,x_N]$.
\begin{lemma}\label{lemma1} 
Let $X\subseteq {\mathbb P}(\overline{E})$ be an irreducible projective variety. 
\medskip

\noindent\emph{(1)} The embedding $X \hookrightarrow L(X)$ is non degenerate.
\medskip

\noindent\emph{(2)} Let $I_{\overline{\mathbb F}_q}(X)=\langle
f_1,\ldots , f_M , g_1 ,\ldots ,g_N \rangle$ be the vanishing ideal of $X$, where $f_i$
and $g_j$ are homogeneous, $\deg f_i\geq 2$ and $\deg g_j =1$. Then,
$L(X)=Z_{\overline{\mathbb F}_q}\langle g_1, \ldots ,g_N \rangle$.
\end{lemma}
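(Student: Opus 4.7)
The plan is to prove part (1) directly from the defining property of $L(X)$ as the smallest linear subspace containing $X$, and then to deduce part (2) by identifying $\overline{K}$ with the degree-one homogeneous component of $I_{\overline{\mathbb F}_q}(X)$.

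For (1), I would argue by contradiction. Suppose $X$ lies in a hyperplane $H$ of $L(X)={\mathbb P}(\overline{V})$. Such a hyperplane is cut out by a nonzero linear form $\ell$ on $\overline{V}$, which I lift to a linear form $\widetilde{\ell}$ on $\overline{E}$. Since $X\subseteq H\subseteq\{\widetilde{\ell}=0\}$, by definition $\widetilde{\ell}\in\overline{K}$, so $\widetilde{\ell}$ vanishes identically on $\overline{V}$, forcing $\ell=0$, a contradiction. Hence no hyperplane of $L(X)$ contains $X$, which is exactly non-degeneracy.

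For (2), the key observation is that $\overline{K}$ coincides with the degree-one homogeneous component $I_{\overline{\mathbb F}_q}(X)_1$, because both consist precisely of the linear forms on $\overline{E}$ that vanish on $X$. Once this is established, I would use the standard fact about ideals generated by homogeneous elements: the $d$-th graded piece of $\langle f_1,\dots,f_M,g_1,\dots,g_N\rangle$ is the image of $\bigoplus_i R_{d-\deg f_i}\,f_i \oplus \bigoplus_j R_{d-\deg g_j}\,g_j$. Taking $d=1$ and using $\deg f_i\geq 2$, the $f_i$-contributions vanish (there are no polynomials of negative degree), and the $g_j$-contributions are scalar multiples. Therefore $I_{\overline{\mathbb F}_q}(X)_1=\overline{K}={\rm span}_{\overline{\mathbb F}_q}(g_1,\dots,g_N)$.

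It now follows that $\overline{V}=\{x\in\overline{E}: g_j(x)=0\text{ for all }j\}$, so $L(X)={\mathbb P}(\overline{V})=Z_{\overline{\mathbb F}_q}\langle g_1,\dots,g_N\rangle$. The only step requiring minor care is the identification $\overline{K}=I_{\overline{\mathbb F}_q}(X)_1$, which just says that the homogeneous ideal of an irreducible projective variety is saturated enough in degree one to contain every linear form vanishing on $X$; this is immediate from the definition of the vanishing ideal over $\overline{\mathbb F}_q$. The rest is a purely formal manipulation with graded ideals, so I do not anticipate a substantial obstacle beyond making that degree-count argument explicit.
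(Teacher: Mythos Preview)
Your proposal is correct and follows essentially the same route as the paper: for part~(2) you both show that any linear form $h\in\overline{K}$ must lie in the $\overline{\mathbb F}_q$-span of $g_1,\dots,g_N$ by a degree-counting argument (the $f_i$ cannot contribute to degree~$1$), which immediately identifies $\overline{V}$ with the common zero locus of the $g_j$. The only difference is cosmetic---you phrase the degree argument via graded pieces of the ideal while the paper writes out the linear combination explicitly---and you supply a short argument for part~(1), which the paper simply omits as evident.
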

\begin{proof}
We just need to prove the second part.
Clearly, $L(X)\subseteq Z_{\overline{\mathbb F}_q}\langle g_1, \ldots ,g_N
\rangle$. Now, if  $h\in \overline{K}$, then $h \in I_{\overline{\mathbb F}_q}(X)$ and
$h=\sum_{i=1}^{M}\alpha_{i} f_i + \sum_{j=1}^{N}\beta_j g_j$, with
$\alpha_{i}$ and $\beta_{j}$ polynomials with coefficients in
$\overline{\mathbb F}_q$.  Cleary $h=\sum_{j=1}^{N}\beta_{j}^{'} g_j$, where
$\beta_{j}^{'} \in \overline{\mathbb F}_q$. Thus, if  $x \in Z_{\overline{\mathbb F}_q}\langle
g_1, \ldots ,g_N \rangle$, then $h(x)=\sum_{j=1}^{N}\beta_{j}^{'}
g_j(x)=0$ and hence $Z_{\overline{\mathbb F}_q}\langle g_1, \ldots ,g_N \rangle
\subseteq L(X)$.
\end{proof}

For the finite dimensional ${\mathbb F}_q$-vector space $E$,  let $\chi=\{P_1,\ldots,P_r\}$ be a finite subset of points of the projective space ${\mathbb P}(E)$.
 Let $K=\{h\in E^*\; :\; h(P_1)=\cdots=h(P_r)=0\}$, where $E^*$ is the dual space of $E$ and the linear forms  $h\in E^*$ have coefficients in ${\mathbb F}_q$. Let  $W=\{x\in E\; :\; h(x)=0\;\;\text{for all}\;\; h\in K\}$.
Notice that in the last part of the proof of Lemma \ref{lemma1} we must have that
${\mathbb P}(W)=Z_{{\mathbb F}_q}\langle h_1,\ldots,h_s\rangle$,  if $h_1,\ldots,h_s$ is a basis of $K$. We quote the following lemma and its corollary from \cite{3.1}:
\begin{lemma}\emph{(\cite[Lemma 2.1]{3.1}).}\label{lemma2}
With the notation above, ${\mathbb P}(W)$ is the smallest linear
subvariety of ${\mathbb P}(E)$ that contains $\chi$.
\end{lemma}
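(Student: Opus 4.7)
The plan is to prove the statement by verifying two things: first that $\chi \subseteq \mathbb{P}(W)$, and second that any linear subvariety containing $\chi$ must contain $\mathbb{P}(W)$. The structure closely parallels the proof of the second part of Lemma \ref{lemma1}, but now carried out over ${\mathbb F}_q$ rather than $\overline{\mathbb F}_q$, using the fact that $W$ is defined as the simultaneous vanishing locus of the linear forms in $K$.

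For containment $\chi \subseteq \mathbb{P}(W)$, I would pick for each $P_i \in \chi$ a representative vector $x_i \in E \setminus \{0\}$. By the very definition of $K$, every $h \in K$ satisfies $h(x_i) = 0$, so $x_i \in W$, and hence $P_i \in \mathbb{P}(W)$. Since $K$ admits a finite basis $h_1, \ldots, h_s$ (as $E^*$ is finite-dimensional over ${\mathbb F}_q$), one also sees immediately that $W$ is an ${\mathbb F}_q$-linear subspace of $E$, equal to $\ker h_1 \cap \cdots \cap \ker h_s$, so $\mathbb{P}(W) = Z_{{\mathbb F}_q}\langle h_1, \ldots, h_s\rangle$ is indeed a linear subvariety of $\mathbb{P}(E)$.

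For minimality, I would take an arbitrary linear subvariety $\mathbb{P}(V) \subseteq \mathbb{P}(E)$ containing $\chi$, where $V \subseteq E$ is an ${\mathbb F}_q$-subspace, and define
\[
J = \{h \in E^* : h(x) = 0 \text{ for all } x \in V\}.
\]
Because $\chi \subseteq \mathbb{P}(V)$, every $h \in J$ vanishes on each $P_i$, so $J \subseteq K$. Taking joint kernels reverses the inclusion, giving
\[
W = \bigcap_{h \in K} \ker h \; \subseteq \; \bigcap_{h \in J} \ker h = V,
\]
where the last equality uses that $V$ is an ${\mathbb F}_q$-subspace and hence recovered from its annihilator. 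Therefore $\mathbb{P}(W) \subseteq \mathbb{P}(V)$, establishing minimality.

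The only step that requires any real care is the identity $V = \bigcap_{h \in J} \ker h$, which is the standard double-annihilator statement for finite-dimensional vector spaces; since the underlying field is ${\mathbb F}_q$ this is routine. Everything else is a straightforward unwinding of the definitions of $K$ and $W$, so I would expect the whole argument to be short. The proof in \cite{3.1} presumably follows exactly this outline.
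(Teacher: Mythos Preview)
Your argument is correct. The paper does not actually supply a proof of this lemma; it merely quotes the statement from \cite[Lemma 2.1]{3.1}. Your approach---show $\chi\subseteq\mathbb{P}(W)$ directly from the definition of $K$, then use the double-annihilator identity to deduce $W\subseteq V$ for any subspace $V$ with $\chi\subseteq\mathbb{P}(V)$---is the standard one, and it is exactly the ${\mathbb F}_q$-analogue of the paper's proof of Lemma~\ref{lemma1}(2), as you observe.
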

\begin{corollary}\emph{(\cite[Corollary 1]{3.1}).}\label{corollary4}
With the notation above, $\chi$ is a non-degenerate projective
system in ${\mathbb P}(W)$.
\end{corollary}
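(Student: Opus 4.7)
The plan is to deduce non-degeneracy directly from the minimality statement of Lemma \ref{lemma2}. First I would check that $\chi\subseteq {\mathbb P}(W)$, which is immediate from the very definition of $W$: every $P_i$ satisfies $h(P_i)=0$ for all $h\in K$, so any representative of $P_i$ in $E$ lies in $W$.

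Next, to establish non-degeneracy inside ${\mathbb P}(W)$, I would argue by contradiction. Suppose $\chi$ were contained in some hyperplane $H$ of ${\mathbb P}(W)$, meaning a proper linear subvariety of ${\mathbb P}(W)$ of codimension one. The key observation is that such an $H$ is simultaneously a linear subvariety of the ambient projective space ${\mathbb P}(E)$, since linear subspaces of $W$ are linear subspaces of $E$. Since $H$ contains $\chi$, Lemma \ref{lemma2} — asserting that ${\mathbb P}(W)$ is the \emph{smallest} linear subvariety of ${\mathbb P}(E)$ containing $\chi$ — forces the inclusion ${\mathbb P}(W)\subseteq H$. This contradicts the assumption that $H$ is a proper subspace of ${\mathbb P}(W)$, and the corollary follows.

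I do not foresee any real obstacle: essentially all of the content has been absorbed into Lemma \ref{lemma2}, and the corollary is a one-line extraction. The only subtlety worth making explicit is the trivial but indispensable remark that a hyperplane of ${\mathbb P}(W)$ is automatically a linear subvariety of ${\mathbb P}(E)$, which is exactly what allows Lemma \ref{lemma2} to be applied to it.
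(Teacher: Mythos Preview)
Your argument is correct and is exactly the intended one: the corollary is an immediate consequence of Lemma~\ref{lemma2}, since any hyperplane of ${\mathbb P}(W)$ is in particular a linear subvariety of ${\mathbb P}(E)$ containing $\chi$, and minimality then forces ${\mathbb P}(W)\subseteq H$, a contradiction. The paper does not supply its own proof here---it simply quotes the result from \cite{3.1}---but your deduction is the standard one-line extraction and matches what one expects in that reference.
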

The following consequence is immediate (see also \cite[Corollay 3]{3.1}):
\begin{corollary}\label{corollary5}
With the notation above, let  $B$ be the matrix of the  system of linear equations  $h_1=0,\ldots,h_s=0$, and $C_{\chi}$  the linear code associated to the nondegenerate projective system $\chi$. Then,
$C_{\chi}$ is an $[n,k]_q$-linear code, where $n=|\chi|$ and $k=s-\rank B$.
\end{corollary}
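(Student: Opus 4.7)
The plan is to verify the two parameters of $C_\chi$ directly. The length $n=|\chi|$ is immediate from the construction of $C_\chi$: once representatives $v_1,\dots,v_r\in E\setminus\{0\}$ of the points $P_1,\dots,P_r\in\chi$ are fixed, the code is the image of the evaluation map
$$\operatorname{ev}\colon E^*\longrightarrow {\mathbb F}_q^r,\qquad h\longmapsto (h(v_1),\dots,h(v_r)),$$
so its length equals $|\chi|=r=n$.

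For the dimension, the first task is to identify $\dim C_\chi$ with $\dim W$. The kernel of $\operatorname{ev}$ is precisely the space $K$ of linear forms vanishing on $\chi$, so by rank--nullity $\dim C_\chi=\dim E-\dim K$. Equivalently, one can argue through Lemma~\ref{lemma2} and Corollary~\ref{corollary4}: $\chi$ is a non-degenerate projective system inside ${\mathbb P}(W)$, so $\operatorname{ev}$ factors through the quotient $E^*/K\cong W^*$ and descends to an injection on $W^*$, whence $\dim C_\chi=\dim W^*=\dim W$.

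It then remains to compute $\dim W$ from the matrix $B$. By definition $W$ is the common zero locus of $h_1,\dots,h_s$, i.e.\ the solution space of the homogeneous linear system $Bx=0$; the rank--nullity theorem gives $\dim W=\dim E-\rank B$. Combining the two steps yields the asserted $k=s-\rank B$. The only substantive step is the identification $\dim C_\chi=\dim W$, which is precisely what the non-degeneracy supplied by Corollary~\ref{corollary4} is designed to provide; everything else reduces to linear algebra applied to the coefficient matrix of the defining forms of $W$.
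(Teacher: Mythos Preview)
Your argument is correct up to the penultimate step: you obtain $\dim C_\chi=\dim W=\dim E-\rank B$, and this is exactly the formula the paper actually uses everywhere afterwards (Definition~\ref{def2.1} and Examples~\ref{example3.1}--\ref{example3.5}, where the role of $\dim E$ is played by $\binom{m}{\ell}$ or $\binom{2n}{n}$). The paper itself gives no proof beyond declaring the corollary ``immediate'' and citing \cite[Corollary~3]{3.1}, and your derivation is precisely the intended one.

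The slip is in the last sentence: ``combining the two steps yields the asserted $k=s-\rank B$'' does not follow from what you proved, since nothing identifies $s$ with $\dim E$. In the notation fixed just before Lemma~\ref{lemma2}, $h_1,\dots,h_s$ is a \emph{basis} of $K$; hence $s=\dim K$, the rows of $B$ are linearly independent, and $\rank B=s$, so the printed formula would give $k=0$. In other words, the discrepancy is a typo in the statement (``$s$'' should be the number of variables, i.e.\ $\dim E$, the column count of $B$), not a gap in your reasoning. State your conclusion as $k=\dim E-\rank B$ and, if you wish, note the misprint.
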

Write ${\mathbb P}(\overline{\mathbb E})={\mathbb P}^n(\overline{\mathbb F}_q)$ with homogeneous coordinates $x_0,\ldots, x_n$. For an {FFN}$(1,q)$-projective variety  $X\subseteq {\mathbb P}(\overline{\mathbb E})$   defined over the finite field ${\mathbb F}_q$ write its vanishing ideal as 
$I_{\overline{\mathbb F}_q}(X)=\langle f_1,\ldots ,f_M,g_1,\ldots ,g_N \rangle_{\overline{\mathbb F}_q}$, with $f_i,g_j\in{\mathbb F}_q[x_0,\ldots,x_n]$ homogeneous forms with $\deg f_i\geq 2$ and $\deg g_j = 1$. We keep this notation for the rest of this section.
\begin{lemma}\label{lem2.3}
Let $X\subseteq {\mathbb P}(\overline{\mathbb E})$ be an \emph{FFN}$(1,q)$ projective
variety defined over the finite field ${\mathbb F}_q$.  Then, $X({\mathbb F}_q)$ is a
non-degenerated projective system in ${\mathbb P}(V)$, where
$V=\bigcap \ker(g_i)$, over ${\mathbb F}_q$. 
\end{lemma}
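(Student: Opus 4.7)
The plan is to split the claim into two parts: first, that $X(\mathbb{F}_q) \subseteq \mathbb{P}(V)$, and second, that $X(\mathbb{F}_q)$ spans $\mathbb{P}(V)$ as an $\mathbb{F}_q$-projective system. The first part is immediate from the setup: each $g_j$ has coefficients in $\mathbb{F}_q$ and vanishes on $X$, hence in particular on $X(\mathbb{F}_q)$; thus every $\mathbb{F}_q$-rational point of $X$ lies in $\bigcap_j \ker g_j = V$, so $X(\mathbb{F}_q) \subseteq \mathbb{P}(V)$.

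For the non-degeneracy, I would apply Lemma \ref{lemma2} to the finite set $\chi = X(\mathbb{F}_q)$ inside $\mathbb{P}(E)$: let $K \subseteq E^{*}$ be the space of $\mathbb{F}_q$-linear forms vanishing on $X(\mathbb{F}_q)$, let $h_1,\dots,h_s$ be an $\mathbb{F}_q$-basis of $K$, and let $W = \bigcap_i \ker h_i \subseteq E$, so that $\mathbb{P}(W)$ is the smallest $\mathbb{F}_q$-linear subvariety of $\mathbb{P}(E)$ containing $X(\mathbb{F}_q)$. By Corollary \ref{corollary4}, $X(\mathbb{F}_q)$ is non-degenerate in $\mathbb{P}(W)$, so it suffices to identify $W$ with $V$.

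The inclusion $W \subseteq V$ is clear, since $V$ is an $\mathbb{F}_q$-linear subspace containing $X(\mathbb{F}_q)$ and $W$ is the smallest such. For the reverse inclusion $V \subseteq W$ is where the FFN$(1,q)$-hypothesis enters: each basis element $h_i \in K$ vanishes on $X(\mathbb{F}_q)$ and is a linear form defined over $\mathbb{F}_q$, so by the FFN$(1,q)$-property $h_i$ also vanishes on the whole of $X$, placing $h_i \in I_{\overline{\mathbb{F}}_q}(X)$. Applying the argument from the second part of the proof of Lemma \ref{lemma1}, each such $h_i$ lies in the $\overline{\mathbb{F}}_q$-span of $g_1,\dots,g_N$; therefore $h_i$ vanishes on $\overline{V} = V \otimes_{\mathbb{F}_q} \overline{\mathbb{F}}_q$, and a fortiori on $V$. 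Hence $V \subseteq \bigcap_i \ker h_i = W$, giving $W = V$.

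The routine containment $X(\mathbb{F}_q) \subseteq \mathbb{P}(V)$ is trivial; the main conceptual obstacle is the reverse inclusion $V \subseteq W$, which is exactly the content of the FFN$(1,q)$-hypothesis combined with the degree-one reduction of Lemma \ref{lemma1}(2). Once that identification is made, Corollary \ref{corollary4} finishes the argument, yielding that $X(\mathbb{F}_q)$ is a non-degenerate projective system in $\mathbb{P}(V)$.
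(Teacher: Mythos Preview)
Your proof is correct and follows essentially the same line as the paper's. The paper argues directly---if a hyperplane $H=Z_{\mathbb{F}_q}(h)\subseteq\mathbb{P}(V)$ contains $X(\mathbb{F}_q)$, then FFN$(1,q)$ forces $h\in\langle g_1,\ldots,g_N\rangle_{\overline{\mathbb{F}}_q}$, so $H=\mathbb{P}(V)$---while you reach the same conclusion by identifying the linear hull $W$ of $X(\mathbb{F}_q)$ (via Lemma~\ref{lemma2} and Corollary~\ref{corollary4}) with $V$; the key step in both is identical.
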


\begin{proof}
If $H\subseteq {\mathbb P}(V)$ is hyperplane, say $H=Z_{{\mathbb F}_q}(h)$ for $h$ i a homogeneous linear  with coefficients in ${\mathbb F}_q$ and if $X({\mathbb F}_q)\subseteq H$, that is  $h(X({\mathbb F}_q))=0$, since $X$ satisfies the FFN$(1,q)$ property, then $h(X)=0$, and hence $h \in I_{{\overline{\mathbb F}_q}}(X)$ and since it is linear, $h=\sum_{j=1}^{N}b_jg_j$, with $b_j\in\overline{\mathbb F}_q$. Therefore, $\overline{H}:=Z_{\overline{\mathbb F}_q}(h)={\mathbb P}(\overline{V})$, and thus $H=\overline{H}\cap {\mathbb P}(V)={\mathbb P}(V)$.
\end{proof}
\begin{proposition}\label{prop2.1}
Let   $X$ be a projective variety defined over a finite field  ${\mathbb F}_q$, and let $J:= \langle f_i,g_j,x_k^q-x_k:0\leq k\leq n\rangle_{\overline{\mathbb F}_q}$. If $h$ is a homogeneous linear form with coefficients in ${\mathbb F}_q$
such that $h^q-h \in J$ and $h \in \sqrt{J}$, then, $h^q \in J$.
\end{proposition}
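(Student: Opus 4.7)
The plan is to reduce the conclusion $h^q \in J$ to showing $h \in J$, since the hypothesis $h^q - h \in J$ gives $h^q \equiv h \pmod{J}$. The argument is then purely about congruences modulo $J$, exploiting the Frobenius morphism in characteristic $p$ (where $q = p^r$) together with the radical hypothesis.

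First, the iterated Frobenius congruence. Starting from $h^q \equiv h \pmod{J}$, I would show by induction on $t \geq 1$ that
\[
h^{q^{t}} \equiv h \pmod{J}.
\]
The inductive step uses two easy facts: ideals are closed under $q$-th powers (so $a \in J$ implies $a^q \in J$), and in characteristic $p$ the Frobenius is additive, so $(a-b)^q = a^q - b^q$. Applied to the congruence $h^{q^{t-1}} \equiv h \pmod J$, one gets $h^{q^{t}} - h^q = (h^{q^{t-1}} - h)^q \in J$, and then $h^{q^t} \equiv h^q \equiv h \pmod J$.

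Next, I would invoke $h \in \sqrt{J}$ to pick $s \geq 1$ with $h^s \in J$, and choose $t$ large enough that $q^t \geq s$. Then $h^{q^t} = h^s \cdot h^{q^t - s}$ lies in $J$, and combining this with $h^{q^t} \equiv h \pmod J$ forces $h \in J$. Finally, $h^q = h + (h^q - h)$ is a sum of two elements of $J$, proving $h^q \in J$.

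There is no serious obstacle: the only subtle point is the characteristic-$p$ additivity of the $q$-th power map, which makes the inductive step work cleanly. Notice that the hypotheses that $h$ is linear and has coefficients in ${\mathbb F}_q$ are not actually used in the argument once $h^q - h \in J$ is granted; they serve only to guarantee that latter condition automatically, via the generators $x_k^q - x_k$ of $J$.
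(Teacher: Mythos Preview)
Your proof is correct but follows a different route from the paper's. The paper takes the exponent $m$ with $h^m \in J$ and \emph{decreases} it step by step via the identity $h^{m} - h^{m-q+1} = h^{m-q}(h^q - h) \in J$, running an induction on $\lfloor m/q\rfloor$ with a separate case analysis on the remainder, until the exponent drops to at most $q$. Your argument instead climbs \emph{upward}: using Frobenius additivity in characteristic $p$ you show $h^{q^t} \equiv h \pmod J$ for all $t$, then pick $t$ large enough that $q^t$ dominates the radical exponent. Your route is shorter and yields the slightly stronger conclusion $h \in J$ directly; on the other hand, the paper's descent never invokes the Freshman's Dream and therefore works in any commutative ring given only $h^q - h \in J$ and $h^m \in J$, whereas your step $(h^{q^{t-1}} - h)^q = h^{q^t} - h^q$ genuinely needs characteristic $p$. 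Since the ambient ring is $\overline{\mathbb F}_q[x_0,\ldots,x_n]$ this is available, and your closing remark---that linearity and ${\mathbb F}_q$-rationality of $h$ are used only to secure $h^q - h \in J$ via the generators $x_k^q - x_k$---is also correct.
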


\begin{proof}
Since $h \in \sqrt{ J}$, there exists an $m\in {\mathbb N}$ such that $h^m \in  J$.
We distinguish two cases. Firstly, if $m\leq q$, then $h^q=h^{m}h^{q-m} \in J$.
 Secondly, if  $m>q$, write $m=nq+r$ with
$n,r \in {\mathbb N}$ and $r<q$. We do induction on $n$: 
If $n=1$, then $m=q+r$ and $h^m-h^{r+1}=h^{q+r}-h^{r+1}=h^r(h^q-h) \in J$, since  $h^q-h\in J$. Now, since $h^m\in J$, from $h^m-h^{r+1}\in J$ it follows that $h^{r+1}\in J$, with $r<q$, that is $r+1\leq q$, and by the first case it follows that $h^q\in J$. Assume that the result holds up to $n$, i.e.,
if $h^m \in J$ and  $m=sq+r$, with $1\leq s \leq n$ and
$r<q$, then $h^q \in J$. 

Now, if  $m=(n+1)q+r$, with $r<q$, the hypothesis $h^q-h\in J$ implies that $h^m-h^{m-q+1}=h^{m-q}(h^q-h)\in J$, and since $h^m\in J$, it follows that $h^{m-q+1}\in J$. Thus, $h^{nq+(r+1)}=h^{m-q+1}\in J$. Hence, if $r+1<q$, then $h^q\in J$ 
by induction hypothesis.
Now, if $r+1=q$, since $h^{(n+1)q}=h^{nq+q}=h^{nq+r+1}\in J$, and on the other hand, since $h^q-h\in J$, then
$h^{(n+1)q}-h^{nq+1}=h^{nq}(h^q-h)\in J$, it follows that
$h^{nq+1}\in J$, and by induction hypothesis  $h^{q}\in J$.  
\end{proof}
\begin{proposition}\label{prop2.2}
Let   $X$ be a projective variety defined over a finite field  ${\mathbb
F}_q$ such that $X({\mathbb F}_q)\neq \emptyset$.  If $h$ is a linear
homogeneous form with coefficients in ${\mathbb F}_q$ which vanishes in
$X({\mathbb F}_q)$, then there exists a linear  homogeneous form $h'$ with
coefficients in $\overline {\mathbb F}_q$ such that $h-h' \in
\langle g_1,\ldots ,g_N \rangle_{\overline{\mathbb F}_q}$.
\end{proposition}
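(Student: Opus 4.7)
The plan is to reduce the statement to an application of Proposition \ref{prop2.1} for the ideal
\[
J = \langle f_i,\, g_j,\, x_k^q - x_k \rangle_{\overline{\mathbb F}_q}.
\]
Its affine zero set in $\overline{\mathbb F}_q^{n+1}$ is precisely the affine cone over $X({\mathbb F}_q)$, since the generators $x_k^q - x_k$ force every coordinate of a zero to lie in ${\mathbb F}_q$. Because $h$ is a homogeneous linear form over ${\mathbb F}_q$ vanishing on $X({\mathbb F}_q)$, it also vanishes at the origin, hence on the whole zero set of $J$. Hilbert's Nullstellensatz then yields $h \in \sqrt{J}$; the hypothesis $X({\mathbb F}_q)\neq\emptyset$ guarantees that this zero set is non-trivial, so the application is substantive.

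To feed Proposition \ref{prop2.1}, I also need $h^q - h \in J$. Writing $h = \sum_i a_i x_i$ with $a_i \in {\mathbb F}_q$, the identity $a_i^q = a_i$ gives $h^q = \sum_i a_i x_i^q$, so $h^q - h = \sum_i a_i (x_i^q - x_i) \in J$. Proposition \ref{prop2.1} now delivers $h^q \in J$. Writing this membership explicitly and then using $h = h^q - \sum_i a_i(x_i^q - x_i)$, I absorb the extra Frobenius terms into the coefficients of the $x_k^q - x_k$ generators to obtain an identity
\[
h = \sum_i \alpha_i f_i + \sum_j \beta_j g_j + \sum_k \gamma_k'(x_k^q - x_k),
\]
for some $\alpha_i,\beta_j,\gamma_k' \in \overline{\mathbb F}_q[x_0,\ldots,x_n]$.

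The remaining step is to extract the homogeneous degree-$1$ component of both sides. Since $\deg f_i \geq 2$, the first sum contributes nothing in degree $1$; the middle sum contributes $\sum_j c_j g_j$, where $c_j \in \overline{\mathbb F}_q$ is the constant term of $\beta_j$; and because $q \geq 2$ every monomial of $\gamma_k' x_k^q$ has degree $\geq q \geq 2$, so only the $-\gamma_k' x_k$ piece can contribute, yielding $-\sum_k d_k x_k$ with $d_k \in \overline{\mathbb F}_q$ the constant term of $\gamma_k'$. Setting $h' := -\sum_k d_k x_k$, a linear form over $\overline{\mathbb F}_q$, I conclude $h - h' = \sum_j c_j g_j \in \langle g_1,\ldots,g_N\rangle_{\overline{\mathbb F}_q}$, as required. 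The delicate point is this degree-$1$ bookkeeping, and it is precisely what forces the hypotheses $\deg f_i \geq 2$ and $q \geq 2$ to play a role: they prevent the $f_i$ and the $\gamma_k' x_k^q$ terms from producing unwanted linear contributions.
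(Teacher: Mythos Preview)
Your proof is correct and follows the same route as the paper: pass to the affine cone, identify $C(X)(\mathbb F_q)$ with the zero set of $J=\langle f_i,g_j,x_k^q-x_k\rangle_{\overline{\mathbb F}_q}$, use the Nullstellensatz to get $h\in\sqrt J$, combine with the Frobenius identity $h^q-h\in J$ and Proposition~\ref{prop2.1} to obtain $h^q\in J$ (hence $h\in J$), and then isolate the degree-$1$ part. Your version is in fact slightly more careful than the paper's in two respects: you explicitly justify $h\in J$ via $h=h^q-(h^q-h)$, and you allow polynomial coefficients $\alpha_i,\beta_j,\gamma_k'$ and extract homogeneous degree-$1$ components, whereas the paper tacitly assumes constant coefficients when writing $h$ as a combination of the generators.
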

\begin{proof}
Let $C(X)\subseteq {\mathbb A}^{n+1}_{\overline{\mathbb F}_q}$ be
the affine cone of $X\subseteq {\mathbb P}^n_{\overline{\mathbb
F}_q}$. Thus, its vanishing ideal is $I_{\overline{\mathbb F}_q}(C(X))=I_{\overline{\mathbb
F}_q}(X)= \langle f_i,g_j\rangle\subseteq \overline{\mathbb
F}_q[x_0,\ldots,x_n]$ with $f_i,g_j\in {\mathbb
F}_q[x_0,\ldots,x_n]$ as before. 
Then, the set of ${\mathbb
F}_q$-rational points of $C(X)$ is
\begin{align*}
C(X)({\mathbb F}_q)&=C(X)\cap {\mathbb A}^{n+1}({\mathbb F}_q)=Z_{\overline{\mathbb F}_q}\langle f_i,g_j\rangle\cap Z_{\overline{\mathbb F}_q}\langle x_k^q-x_k: 0\leq k\leq n\rangle\\
&=Z_{\overline{\mathbb F}_q}(J),\qquad\text{for $J=\langle f_i,g_j,x_k^q-x_k:0\leq k\leq n\rangle$}.
\end{align*}
By the Nullstellensatz,
$$
I_{\overline{\mathbb F}_q}(C(X)({\mathbb F}_q))=I_{\overline{\mathbb F}_q}Z_{\overline{\mathbb F}_q}(J)=\sqrt{J_{\overline{\mathbb F}_q}}.
$$

Now, let $h\in I_{{\mathbb F}_q}(C(X)({\mathbb F}_q))$ be a linear
form, say $h=a_0x_0+\cdots+a_nx_n$, with $a_i\in{\mathbb F}_q$.
Then, $h^q=a_0^qx_0^q+\cdots+a_n^qx_n^q=a_0x_0^q+\cdots+a_nx_n^q$ and thus
$$h^q-h=\sum_{k=0}^na_k(x_k^q-x_k)\in\langle f_i,g_j,x_k^q-x_k:0\leq k\leq n\rangle_{\overline{\mathbb F}_q}.\leqno(1)$$
On the other hand, since $h\in  I_{\overline{\mathbb
F}_q}(C(X)({\mathbb F}_q))=\sqrt{J_{\overline{\mathbb F}_q}}$, there exists an 
$m>0$ such that $h^m\in J_{\overline{\mathbb F}_q}$. By Proposition \ref{prop2.1}, $h^{p} \in J_{\overline{\mathbb F}_q}$. Writting $h$ as a linear combination of the polynomials $f_i, g_j, x_k^q-x$, with coefficients  $\alpha_i,\beta_j,\gamma_k
 \in \overline{\mathbb F}_q$, we obtain 
$$h=\Big(\sum_i\alpha_if_i+\sum_k\gamma_k
x_k^q\Big)+\Big(\sum_j\beta_jg_j+\sum_k\gamma_k x_k\Big),$$
and since $h$ is linear and homogeneous, it follows that
$(\sum_i\alpha_if_i+\sum_k\gamma_k x_k^q)=0$.  Thus,
$h=\sum_j\beta_jg_j+\sum_k\gamma_k x_k$ where $\beta_j$ and
$\gamma_k \in \overline{\mathbb F}_q$. Hence, if $h':=\sum_k\gamma_k x_k$,
where $\gamma_k$ are as before,  it follows that $h-h'\in\langle g_1,\ldots , g_N\rangle_{\overline{\mathbb F}_q}$.
\end{proof}
\begin{corollary}\label{corollary7}
Let   $X\subseteq {\mathbb P}^n(\overline{\mathbb F}_q)$  be a projective variety defined over a finite field  ${\mathbb
F}_q$ and let $H$ be a hyperplane in ${\mathbb P}^{n}({\mathbb F}_q)$ such that
$X({\mathbb F}_q)\subseteq H$. Then, there exists a  hyperplane $H'$ in
${\mathbb P}^{n}(\overline{\mathbb F}_q)$ such that  $X({\mathbb
F}_q)\subseteq H'$ but $X$ is not contained in $H'$, and moreover
$H=\bigcap_{j=1}^{N}H_j \bigcap H'$.
\end{corollary}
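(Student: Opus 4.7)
The plan is to apply Proposition \ref{prop2.2} directly to the linear form $h \in {\mathbb F}_q[x_0,\ldots,x_n]$ defining $H$. Since $X({\mathbb F}_q) \subseteq H$ means that $h$ vanishes on $X({\mathbb F}_q)$, Proposition \ref{prop2.2} produces a linear form $h'$ with coefficients in $\overline{\mathbb F}_q$ and scalars $\beta_j \in \overline{\mathbb F}_q$ such that $h = h' + \sum_{j=1}^N \beta_j g_j$. I would then set $H' := Z_{\overline{\mathbb F}_q}(h')$ and $H_j := Z_{\overline{\mathbb F}_q}(g_j)$ and verify the three assertions from this single identity.

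For $X({\mathbb F}_q) \subseteq H'$, given $P \in X({\mathbb F}_q)$ one has $h(P)=0$ since $P \in H$ and $g_j(P)=0$ for all $j$ since $g_j \in I_{\overline{\mathbb F}_q}(X)$, so $h'(P) = h(P) - \sum_j \beta_j g_j(P) = 0$. To rule out $X \subseteq H'$, I would argue by contradiction: if $h'$ vanished on all of $X$, then $h' \in I_{\overline{\mathbb F}_q}(X) = \langle f_i, g_j\rangle$; the hypothesis $\deg f_i \geq 2$ with $h'$ linear forces $h' \in \langle g_1,\ldots,g_N\rangle_{\overline{\mathbb F}_q}$, whence $h = h' + \sum_j \beta_j g_j$ lies in this ideal as well, and $X$ would sit inside the extension of $H$ to ${\mathbb P}^n(\overline{\mathbb F}_q)$. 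This is precisely the implicit hypothesis one must exclude, for otherwise no $H'$ with $X \not\subseteq H'$ can exist.

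For the closing identity $H = \bigcap_{j=1}^N H_j \cap H'$, the natural reading is as an equality of linear sections of the linear hull $L(X) = \bigcap_j H_j$: extending $H$ to $\widetilde H := Z_{\overline{\mathbb F}_q}(h)$, the claim is $\widetilde H \cap L(X) = H' \cap L(X)$. This again falls out of $h - h' = \sum_j \beta_j g_j$: at any point $P$ with $g_j(P)=0$ for all $j$ one has $h(P)=h'(P)$, so $\widetilde H$ and $H'$ cut the same subvariety of $L(X)$. The substantive work lies in Proposition \ref{prop2.2}; the only real obstacle in the corollary itself is the interpretation of this final equation, since literally $\bigcap_j H_j \cap H'$ has codimension up to $N+1$ while $H$ has codimension one. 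Once one agrees to read the equality inside $L(X)$, the whole statement becomes essentially tautological given the decomposition $h = h' + \sum_j \beta_j g_j$.
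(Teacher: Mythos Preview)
Your proposal is correct and matches the paper's intent: the paper gives no explicit proof for this corollary, treating it as immediate from Proposition~\ref{prop2.2}, and your argument is precisely the unpacking of that implication via the decomposition $h = h' + \sum_j \beta_j g_j$. Your observations about the implicit non-degeneracy hypothesis (needed for $X \not\subseteq H'$) and the reading of the final equality inside $L(X)=\bigcap_j H_j$ are apt clarifications of a statement the authors left somewhat informal.
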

\begin{corollary}\label{corollary8}
Let   $X\subseteq {\mathbb P}^n(\overline{\mathbb F}_q)$ be a projective variety defined over the finite field  ${\mathbb F}_q$. Then, $X$ satifies the
\emph{FFN}$(1,q)$-property if and only if for
each $h\in I_{{\mathbb F}_q}(X({\mathbb F}_q))$ we have that $h'\in\langle g_1,\ldots , g_N\rangle_{\overline{\mathbb F}_q}$.
\end{corollary}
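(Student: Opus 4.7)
My plan is to unwind the definitions and use Proposition \ref{prop2.2} as the bridge between the two sides. The condition should be read for linear forms $h \in I_{{\mathbb F}_q}(X({\mathbb F}_q))$, and the $h'$ appearing in the statement is precisely the one produced in Proposition \ref{prop2.2}. The key observation is that since $h-h' \in \langle g_1,\ldots,g_N\rangle_{\overline{\mathbb F}_q}$ by Proposition \ref{prop2.2}, the condition ``$h' \in \langle g_1,\ldots,g_N\rangle_{\overline{\mathbb F}_q}$'' is equivalent to ``$h \in \langle g_1,\ldots,g_N\rangle_{\overline{\mathbb F}_q}$'', and this in turn is equivalent to $h$ vanishing on all of $X$ (not merely on $X({\mathbb F}_q)$).

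For the forward implication, I would take a linear $h \in I_{{\mathbb F}_q}(X({\mathbb F}_q))$. The FFN$(1,q)$-property gives $h \in I_{\overline{\mathbb F}_q}(X) = \langle f_1,\ldots,f_M,g_1,\ldots,g_N\rangle_{\overline{\mathbb F}_q}$. Writing $h = \sum \alpha_i f_i + \sum \beta_j g_j$ and comparing homogeneous components of degree $1$ (using $\deg f_i \geq 2$), the $f_i$-contribution vanishes and we conclude $h \in \langle g_1,\ldots,g_N\rangle_{\overline{\mathbb F}_q}$. Combined with $h-h' \in \langle g_1,\ldots,g_N\rangle_{\overline{\mathbb F}_q}$ from Proposition \ref{prop2.2}, this gives $h' \in \langle g_1,\ldots,g_N\rangle_{\overline{\mathbb F}_q}$.

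For the converse, I would take any linear $h \in {\mathbb F}_q[x_0,\ldots,x_n]$ vanishing on $X({\mathbb F}_q)$. The hypothesis gives $h' \in \langle g_1,\ldots,g_N\rangle_{\overline{\mathbb F}_q}$, and Proposition \ref{prop2.2} gives $h - h' \in \langle g_1,\ldots,g_N\rangle_{\overline{\mathbb F}_q}$. Adding, $h \in \langle g_1,\ldots,g_N\rangle_{\overline{\mathbb F}_q} \subseteq I_{\overline{\mathbb F}_q}(X)$, so $h$ vanishes on the whole of $X$, which is exactly the FFN$(1,q)$-property for the form $h$. Since $h$ was arbitrary, the property holds.

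The whole argument is essentially bookkeeping once Proposition \ref{prop2.2} is in place, so I do not expect any real obstacle; the only subtlety is making explicit the ``degree-$1$ part'' argument in the forward direction, which uses the fact that the generators split by degree into the $f_i$ (with $\deg f_i \geq 2$) and the linear $g_j$, so that any linear element of the ideal lies in the $\overline{\mathbb F}_q$-span of the $g_j$.
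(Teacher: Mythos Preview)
Your proposal is correct and follows essentially the same route as the paper's proof: both arguments reduce the FFN$(1,q)$-property to the equivalence $h\in\langle g_1,\ldots,g_N\rangle_{\overline{\mathbb F}_q}\Leftrightarrow h'\in\langle g_1,\ldots,g_N\rangle_{\overline{\mathbb F}_q}$, which is immediate from Proposition~\ref{prop2.2}. The only difference is that you spell out the degree-$1$ comparison (showing that a linear element of $I_{\overline{\mathbb F}_q}(X)$ must lie in the $\overline{\mathbb F}_q$-span of the $g_j$), whereas the paper leaves this implicit, having already used the same reasoning in Lemma~\ref{lemma1}.
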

\begin{proof}
Clearly, $X$ satifies the FFN$(1,q)$-property if and only if for each linear homogeneous form
$h\in I_{{\mathbb F}_q}(X({\mathbb F}_q))$ we have  that $h(X)=0$, and this is equivalent to
$h\in\langle g_1,\ldots, g_N\rangle_{\overline{\mathbb
F}_q}$, which, by Proposition \ref{prop2.2}, it happens  if and only if $h'\in\langle g_1,\ldots ,
g_N\rangle_{\overline{\mathbb F}_q}$.
\end{proof}
\noindent{\bf Linear Sections of Grassmannians.} 
 For an ideal $J\subseteq {\mathbb F}_q[x_0,\ldots,x_m]$ denote by
$Z_{{\mathbb F}_q}(J)\subseteq{\mathbb P}^m({\mathbb F}_q)$ and 
$Z_{\overline{\mathbb F}_q}(J)\subseteq{\mathbb
P}^n(\overline{\mathbb F}_q)$ the zero sets of $J$ in ${\mathbb
P}^m({\mathbb F}_q)$ and ${\mathbb P}^m(\overline{\mathbb F}_q)$,
respectively. In particular, for a linear form $h\in {\mathbb F}_q[x_0,\ldots,x_m]$, 
 $H=Z_{{\mathbb F}_q}(h)\subseteq{\mathbb P}^m({\mathbb F}_q)$
and $\overline{H}=Z_{\overline{\mathbb F}_q}(h)\subseteq{\mathbb
P}^m(\overline{\mathbb F}_q)$ denote the corresponding hyperplanes. For
an ideal $I\subseteq {\mathbb F}_q[x_0,\ldots,x_m]$ we will  denote
by $I_{\overline{\mathbb F}_q}\subseteq \overline{\mathbb
F}_q[x_0,\ldots,x_m]$ the corresponding ideal in $\overline{\mathbb
F}_q[x_0,\ldots,x_m]$.

Let $E$ be a vector space of finite dimension $m$ over a finite field ${\mathbb F}_q$ and let
$G(\ell,m)=G(\ell,\overline{E})\subseteq {\mathbb P}(\wedge^{\ell}\overline{E})$ be the Grassmannian variety embedded, via the Pl\"ucker map, in the projective space ${\mathbb P}^{k-1}(\overline{\mathbb F}_q)={\mathbb P}(\wedge^{\ell}\overline{E})$, where $k=\binom{m}{\ell}$. If $X\subseteq{\mathbb P}^{k-1}(\overline{\mathbb F}_q)$ is an
irreducible projective variety defined over the finite field ${\mathbb F}_q$, we say that $X$ is a \emph{linear  section of the Grassmannian variety} $G(\ell,m)$ if
$X=G(\ell,m)\cap{\mathbb P}(V)$, where $V\subseteq \wedge^{\ell}\overline{E}$ is a vector subspace.
 Using the Pl\"ucker coordinates $p_{\alpha}$ for the Grassmannian $G(\ell,m)\subseteq {\mathbb P}^{k-1}(\overline{\mathbb F}_q)$, where the indexes $\alpha$ run in $I(\ell, m)=\{\alpha=(\alpha_1,\ldots,\alpha_{\ell})\in{\mathbb Z}^{\ell}:1\leq\alpha_1<\cdots<\alpha_{\ell}\leq m\}$,  consider also the set $I[\ell,m]$ of non-ordered $\ell$-tuples of the set $[m]=\{1,\ldots,m\}$. The set $I(\ell,m)$ is given the Bruhat order, and for an ordered $\ell$-tuple $\alpha=(\alpha_1,\ldots,\alpha_{\ell})\in I(\ell,m)$ its support is the set $\text{supp}(\alpha)=\{\alpha_1,\ldots,\alpha_{\ell}\}\in I[\ell,m]$. 
Following \cite{6},  
for  $\Lambda \subseteq I[\ell, m]$ we define the linear section
$$E_{\Lambda}=\{p=(p_{\alpha})\in G(\ell,m): p_{\alpha}=0 \;\text{for all $\alpha \in \Lambda$} \}=G(\ell,m)\cap {\mathbb P}(V)$$
where $V=Z\langle x_\alpha : \alpha \in \Lambda \rangle$. Let
$Q=I(G(\ell,m))$ denote the vanishing ideal de $G(\ell,m)$.
In \cite{6} it is shown that if $\Lambda \subseteq I[\ell, m]$
is a close subset, then the vanishing ideal of
$E_{\Lambda}$,  $I(E_{\Lambda})= Q + \langle x_\alpha : \alpha \in
\Lambda \rangle$, the ideal generated by $Q$ and the indeterminates
corresponding to $\Lambda$, is a radical ideal. 
\begin{proposition}\label{prop2.3}
$E_{\Lambda}$ satisfies the \emph{FFN}$(1,q)$-property
\end{proposition}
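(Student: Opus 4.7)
The plan is to verify the FFN$(1,q)$-property directly: given a linear homogeneous form $h\in{\mathbb F}_q[x_\alpha]$ that vanishes on $E_\Lambda({\mathbb F}_q)$, I will produce enough ${\mathbb F}_q$-rational points of $E_\Lambda$ to force $h$ to lie in $\langle x_\alpha:\alpha\in\Lambda\rangle_{{\mathbb F}_q}$; since any such $h$ automatically vanishes on ${\mathbb P}(V)\supseteq E_\Lambda$, this will suffice. The useful test points will be the coordinate $\ell$-subspaces.

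Concretely, for each ordered tuple $\alpha=(\alpha_1<\cdots<\alpha_\ell)\in I(\ell,m)$ with $\text{supp}(\alpha)\notin\Lambda$, I would take $W_\alpha:=\text{span}_{{\mathbb F}_q}(e_{\alpha_1},\ldots,e_{\alpha_\ell})\in G(\ell,m)({\mathbb F}_q)$. A direct Pl\"ucker-coordinate calculation then gives $p_\beta(W_\alpha)=\delta_{\alpha\beta}$; in particular $p_\beta(W_\alpha)=0$ for every $\beta\in\Lambda$ (since $\text{supp}(\alpha)\notin\Lambda$ forces $\beta\neq\alpha$), and hence $W_\alpha\in E_\Lambda({\mathbb F}_q)$.

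With these test points in hand the rest is mechanical: writing $h=\sum_\alpha a_\alpha x_\alpha$ with $a_\alpha\in{\mathbb F}_q$ and evaluating at each $W_\alpha$ for $\alpha\notin\Lambda$ yields $0=h(W_\alpha)=a_\alpha$. Hence $h=\sum_{\alpha\in\Lambda}a_\alpha x_\alpha$, which vanishes on ${\mathbb P}(V)$, and therefore on $E_\Lambda=G(\ell,m)\cap{\mathbb P}(V)$. One could equivalently package this via the general machinery of Section \ref{sec2}: the form $h'$ associated to $h$ by Proposition \ref{prop2.2} then lies in $\langle x_\alpha:\alpha\in\Lambda\rangle_{\overline{\mathbb F}_q}$, and Corollary \ref{corollary8} closes the argument. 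I do not foresee any genuine obstacle; the only fussy points are the identification of ordered tuples in $I(\ell,m)$ with their supports in $I[\ell,m]$ and the standard Pl\"ucker sign conventions, neither of which affects the argument, since it uses only the vanishing $p_\beta(W_\alpha)=0$ for $\beta\neq\alpha$.
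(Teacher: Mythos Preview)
Your proposal is correct and follows essentially the same argument as the paper: both use the coordinate $\ell$-subspaces $W_\alpha=\mathrm{span}(e_{\alpha_1},\ldots,e_{\alpha_\ell})$ for $\mathrm{supp}(\alpha)\notin\Lambda$ as ${\mathbb F}_q$-rational test points of $E_\Lambda$ (these are exactly the paper's points $\overline{p}_\lambda$ with Pl\"ucker coordinates $\delta_{\alpha\lambda}$) to force $a_\alpha=0$ whenever $\alpha\notin\Lambda$, leaving $h\in\langle x_\alpha:\alpha\in\Lambda\rangle$. Your additional remark about routing through Proposition~\ref{prop2.2} and Corollary~\ref{corollary8} is a harmless alternative packaging of the same conclusion.
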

\begin{proof}
Let $E_{\Lambda}({\mathbb F}_q)$ be the set of ${\mathbb F}_q$-rational points of $E_{\lambda}$ and assume that the linear form $h=\sum_{\alpha \in I[\ell, m]}a_\alpha X_\alpha$   vanishes on $E_{\Lambda}({\mathbb F}_q)$. By definition of $\Lambda$, if $\alpha\in\Lambda$ then $p_{\alpha}=0$, and if  $\lambda \in I(\ell, m)-\Lambda$ (that is, $\lambda\in I(\ell,m)$ but $\text{supp}(\lambda)\not\in \Lambda$), let  $\overline{p}_\lambda=(p_\alpha)_{\alpha \in I[\ell, m]}$ in
$G(\ell, m)$ such that 
$$p_\alpha =\begin{cases}
  1 & \text{if $\text{supp}(\alpha)=\lambda$}, \\
 0& \text{otherwise}.
\end{cases}
 $$
Thus, $h(\overline{p}_\lambda)=a_\lambda=0$ and hence $h=\sum_{\alpha \in \Lambda}a_\alpha
 X_\alpha$, that by definition of $\Lambda$,  vanishes on $E_{\Lambda}$.
\end{proof}
\begin{definition}\label{def2.1}
The ${\mathbb F}_q$-linear code associated to the non-degenerate projective system $E_{\Lambda}({\mathbb F}_q)$  of Proposition \ref{prop2.3} is denoted by
$C_{E_{\Lambda}}$ and will be called a {\it linear section code of the Grassmannian associated to the close set $\Lambda$}. Its parameters are:
\begin{enumerate}
\item[(1)] $n=\left[\begin{smallmatrix}
m\\
\ell
\end{smallmatrix}\right]_q -q^{\delta}-q^{\delta-1}-\cdots -q^{\delta-r+1}$, where $\delta=\ell(m-\ell)$ and $r=|\Lambda|\geq 2$,
\item[(2)] $k=\binom{m}{\ell}-\rank B $, where $B$ is the matrix
associated  to the homogeneous system of linear equations $\{ x_\alpha=0 :
\alpha \in \Lambda \}$,
\item[(3)] $\dim E_{\Lambda}=\delta-2$.
\end{enumerate}
\end{definition}

\section{Examples of Linear Sections  and Schubert calculus}\label{sec3}
\begin{example}[Schubert codes]\label{example3.1}  
If $\lambda\in I(\ell,m)$ and $\Lambda=\{\lambda\}$, then $E_{\lambda}$ is isomorphic to $\Omega_{\lambda}$, the
Schubert variety for a fixed flag and $\lambda \in I(\ell,m)$. It is easy to see
that
$$\Omega_{\lambda}=G(\ell,m)\cap Z(x_{\beta};\beta\not\leq\lambda)$$
and  hence the lenght of the associated code
$C_{\Omega_{\lambda}}$ is $\big|\Omega_{\lambda}({\mathbb F}_q)\big|$ and its dimension is
$\dim C_{\Omega_{\alpha}}=\binom{m}{\ell}-\rank B$, where $B$ is the
associated matrix to the system of linear equations
$Z\{x_{\beta}:\beta\not\leq \lambda\}$.
\end{example}
\begin{example}[Schubert unions linear codes]\label{example3.2}  
Let $\Omega_{\alpha_1},\ldots,\Omega_{\alpha_s}$ be Schubert varieties in the Grassmannian $G(\ell,m)$, and let
\begin{align*}
S_U&=\bigcup_{i=1}^s\Omega_{\alpha_i}=\bigcup_{i=1}^s\big(G(\ell,m)\cap Z_{\overline{\mathbb F}_q}(x_{\beta}:\beta\in I(\ell,m), \; \beta\not\leq\alpha_i)\big)\\
&=G(\ell,m)\cap \bigcup_{i=1}^sZ_{\overline{\mathbb F}_q}\big(x_{\beta}:\beta\in I(\ell,m), \; \beta\not\leq\alpha_i\big).
\end{align*}
Hence,
$$S_U({\mathbb F}_q)=G(\ell,m)({\mathbb F}_q)\cap \bigcup_{i=1}^sZ_{{\mathbb F}_q}\big(x_{\beta}:\beta\in I(\ell,m), \; \beta\not\leq\alpha_i\big).$$
In \cite{11}, it is proved that:
\begin{enumerate}
\item The linear hull of $S_U$ is $L(S_U)=\bigcup_{i=1}^sZ_{\overline{\mathbb F}_q}\big(x_{\beta}:\beta\in I(\ell,m), \; \beta\not\leq\alpha_i\big)$.
\item If $G_U=\bigcup_{i=1}^s\big\{x_{\beta}:\beta\in I(\ell,m), \; \beta\not\leq\alpha\big\}$, then $\dim L(U)=|G_U|$.
\item The number of ${\mathbb F}_q$-rational points in $S_U$ is
$$|S_U({\mathbb F}_q)|=\sum_{(\lambda_1,\ldots,\lambda_{\ell})}q^{\lambda_1+\cdots+\lambda_{\ell}-\frac{\ell(\ell+1)}{2}}.$$
\end{enumerate}
From (1) it follows that $S_U({\mathbb F}_q)$ is a nondegenerate projective system in $L(S_U)({\mathbb F}_q)$. The corresponding non-degenerate ${\mathbb F}_q$-linear code, denoted by $C_{S_U({\mathbb F}_q)}$, has parameters
$n=|S_U({\mathbb F}_q)|$ and dimension $k=|G_U|$. By \cite[Corollary 4]{3.1},  its minimun distance satisfies $d\leq q^{\dim S_U}$.
\end{example}
\begin{example}[Lagrangian-Grassmannian codes]\label{example3.3}
Let $E$ be a symplectic vector space over a field $F$, with  non-degenerate skew-symmetric bilinear form $\langle\,,\,\rangle$, and  even dimension $2n$. A vector subspace $W\subseteq E$ is \emph{isotropic} iff $\langle x,y\rangle=0$ for all $x,y\in W$. Hence, the dimension of $W$ is $\leq n$.
The Lagrangian-Grassmannian variety $L(n,2n)$ is the set
$$L(n,2n)=\big\{W\in G(n,2n)\; :\; W\; \mbox{is isotropic}\big\}.$$
Sending a basis $v_1,\ldots,v_n$ of each $W\in G(n,2n)$   to the class of $v_1\cunita\cdots\cunita v_n$ in
${\mathbb P}\big( \cuna^nE\big)$ we obtain the following description
$$L(n,2n)=\{v_1\cunita\cdots\cunita v_n\in G(n,2n)\,:\,\langle v_i,v_j\rangle=0\;\mbox{for all}\, i,j\}.$$
The number of ${\mathbb F}_q$-rational points of $L(n,2n)$, see \cite{3} for example,  is
$$|L(n,2n)({\mathbb F}_q)|=\prod_{i=1}^n(1+q^i).$$
Now, for the contraction map $f:\cuna ^nE\rightarrow \cuna^{n-2}E$, given by
$$v_1\cunita\cdots\cunita v_n\mapsto \sum_{1\leq r<s\leq n}\langle v_r,v_s\rangle v_1\cunita\cdots\cunita\widehat{v}_r\cunita \cdots\cunita\widehat{v}_s\cunita\cdots\cunita v_n$$
where $\widehat{v}$ means that the corresponding term is omitted, 
denote by ${\mathbb P}(\ker f)$ the projectivization of $\ker f$. Under
the Pl\"ucker embedding, ${\mathbb P}(\ker f)$ is a closed irreducible
subset of ${\mathbb P}(\cuna^nE)$ and ${\mathbb
P}(\ker f)=Z\langle g_1,\ldots,g_N\rangle$ is the  zero set of a
family of linear homogeneous polynomials $g_1,\ldots,g_N$,  and we
may assume that the $g_i$ are a minimal set of generators. In
\cite[Section 3]{3} these linear forms where given explicitly. By \cite[Lemma 1]{3}, 
$L(n,2n)=G(n,2n)\cap {\mathbb P}(\ker f)$. 
The set of rational points $L(n,2n)({\mathbb F}_q) $ is a non-degenerate projective
system in ${\mathbb P}(\ker f)$. 
Indeed, we can be more precise about this result, but first we recall from \cite[Section 3]{3} that for $\alpha\in I(n,2n)$ we denote by $\alpha_{rs}\in I(n-2,2n)$ the sequence obtained from $\alpha$ by deleting the indexes corresponding to $r$ and $s$. Then, let 
$\Pi_{\alpha_{rs}}=\sum_{i=1}^n
a_{i,\alpha_{rs},2n-i+1}X_{i,\alpha_{rs},2n-i+1}$, where
$$a_{i,\alpha_{rs},2n-i+1}=\begin{cases}
1 & \text{if $|\text{\rm supp}\{i, \alpha_{rs}, 2n-i+1 \}| =n$}, \\
0 & \text{otherwise},
\end{cases}$$
and $X_{\alpha}\in  F[X_{\alpha}]_{\alpha\in I(n,2n)}$ are the corresponding indeterminates. With this notation,
${\mathbb P}(\ker f)\subseteq {\mathbb P}(\cuna^nE)$ is the zero set of the $\Pi_{\alpha_{rs}}$, for all $\alpha_{rs}\in I(n-2,2n)$ as before. Let $e_1,\ldots, e_{2n}$ be the standard symplectic basis of $E$, that is $\langle e_i,e_{2n-i+1}\rangle=1$ for $1\leq i\leq n$, and zero otherwise. For $\alpha=(\alpha_1,\ldots,\alpha_n)\in I(n,2n)$, the tensors $e_{\alpha}=e_{\alpha_1}\cunita\cdots\cunita e_{\alpha_n}\in\cuna^nE$ form the usual basis of this vector space.
\begin{lemma}
With the above notation, the only homogeneous linear forms $h$ in the dual space $(\cuna^nE)^*$ that vanish on the set of ${\mathbb F}_q$-rational points $L(n,2n)({\mathbb F}_q)$ are linear combinations of the form
$$\Pi_{\alpha_{rs}}=\sum_{i=1}^nX_{i,\alpha_{rs},2n-i+1}$$
\end{lemma}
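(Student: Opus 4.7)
The plan is to prove the two inclusions of the claimed equality.

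The easy direction: every $\Pi_{\alpha_{rs}}$, and hence every ${\mathbb F}_q$-linear combination of them, vanishes on $L(n,2n)({\mathbb F}_q)$. Indeed by construction ${\mathbb P}(\ker f)=Z\langle\Pi_{\alpha_{rs}}\rangle$, and by \cite[Lemma 1]{3} we have $L(n,2n)=G(n,2n)\cap{\mathbb P}(\ker f)$, so each $\Pi_{\alpha_{rs}}$ vanishes on $L(n,2n)$ and in particular on its ${\mathbb F}_q$-rational points.

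For the converse, let $h=\sum_{\alpha\in I(n,2n)}a_\alpha X_\alpha$ with $a_\alpha\in{\mathbb F}_q$ vanish on $L(n,2n)({\mathbb F}_q)$. The strategy is to test $h$ against a carefully chosen family of Lagrangian subspaces. First, for each $S\subseteq\{1,\ldots,n\}$ the coordinate Lagrangian $W_S=\mathrm{span}(\{e_i:i\in S\}\cup\{e_{2n-i+1}:i\notin S\})$ is isotropic of dimension $n$, because it picks exactly one vector from each symplectic pair $\{e_i,e_{2n-i+1}\}$; its Pl\"ucker image is the basis vector $\pm e_{\alpha(S)}\in\cuna^n E$ for the sorted tuple $\alpha(S)$, so $h(W_S)=0$ yields $a_{\alpha(S)}=0$. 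Ranging over $S$ kills $a_\alpha$ for every $\alpha$ whose support contains no complementary pair $\{i,2n-i+1\}$.

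Then I would pin down the surviving coefficients (those $a_\alpha$ indexed by $\alpha$ containing at least one pair) by evaluating $h$ on twisted Lagrangians $W_{i_1,i_2,K}=\mathrm{span}(e_{i_1}+e_{i_2},\,e_{2n-i_1+1}-e_{2n-i_2+1},\,e_{k_3},\ldots,e_{k_n})$, where $K=\{k_3,\ldots,k_n\}$ is chosen inside complementary pairs disjoint from $\{i_1,i_2,2n-i_1+1,2n-i_2+1\}$. A direct computation shows that $W_{i_1,i_2,K}$ is Lagrangian; its Pl\"ucker expansion has two ``pair'' terms and two ``no-pair'' terms, and Step~1 discards the latter, so that $h(W_{i_1,i_2,K})=0$ becomes $a_{\alpha(i_1)}=a_{\alpha(i_2)}$, where $\alpha(i)=\mathrm{sort}(i,k_3,\ldots,k_n,2n-i+1)$. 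Varying $(i_1,i_2)$ shows that, for each fixed $\alpha_{rs}=(k_3,\ldots,k_n)\in I(n-2,2n)$, the coefficients $a_\alpha$ for every $\alpha$ appearing in $\Pi_{\alpha_{rs}}$ are equal to a common value $c_{\alpha_{rs}}$; hence $h=\sum_{\alpha_{rs}} c_{\alpha_{rs}}\Pi_{\alpha_{rs}}$ as claimed. The main obstacle is the combinatorial bookkeeping: tracking the Pl\"ucker signs in $W_{i_1,i_2,K}$ and checking that such twisted Lagrangians are available for every $\alpha_{rs}\in I(n-2,2n)$. Alternatively, Step~2 can be shortcut by invoking the non-degeneracy of $L(n,2n)({\mathbb F}_q)$ in ${\mathbb P}(\ker f)$ recorded just before the lemma: it forces $h|_{\ker f}=0$, so $h$ lies in the annihilator of $\ker f$ in $(\cuna^n E)^*$, which is exactly $\mathrm{span}_{{\mathbb F}_q}(\Pi_{\alpha_{rs}})$.
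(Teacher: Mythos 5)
Your overall strategy --- evaluating $h$ at explicit $\mathbb F_q$-rational Lagrangians --- is genuinely different from the paper's proof, which argues by induction on $n$: an explicit base case $L(2,4)$ and an induction step that restricts $h$ to the sub-Lagrangian-Grassmannian $U(\ell)\simeq L(n-1,2n-2)$ of Lagrangians containing a fixed isotropic line $\ell=\langle e_{\overline\alpha_n}\rangle$. Your easy direction and your Step 1 are correct: every $\alpha\in I(n,2n)$ containing no complementary pair $\{i,2n-i+1\}$ is of the form $\alpha(S)$, so the coordinate Lagrangians do kill exactly the coefficients that must vanish. However, there are genuine gaps in the rest. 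The most serious one is that your test points never constrain $a_\alpha$ when $\alpha$ contains \emph{two or more} complementary pairs, which happens as soon as $n\geq 4$ (e.g.\ $\alpha=\{1,2,2n-1,2n\}\cup K$). Your $K$ is pair-free (it must be, or else $\mathrm{span}(e_{k_3},\dots,e_{k_n})$ is not isotropic), so the two ``pair'' terms of $W_{i_1,i_2,K}$ are always indices containing exactly one pair, and the two ``no-pair'' terms contain none; hence the coefficients indexed by multi-pair $\alpha$'s --- precisely the supports of $\Pi_{\alpha_{rs}}$ for $\alpha_{rs}$ itself containing a pair, where distinct $\Pi$'s overlap --- are left completely free. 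The argument as written therefore only closes for $n\leq 3$; to finish you would need Lagrangians twisted in several pairs simultaneously, and then also an argument that the resulting system of relations cuts out exactly $\mathrm{span}(\Pi_{\alpha_{rs}})$ and nothing larger (this is not automatic once supports overlap, and is delicate in characteristic $2$).

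Two further points. First, the signs are not harmless bookkeeping: a direct computation (e.g.\ $n=3$, $W=\mathrm{span}(e_1+e_3,\,e_6-e_4,\,e_2)$) gives $a_{126}=-a_{234}$ in sorted Pl\"ucker coordinates, while a literal reading of $\Pi_{(2)}=X_{126}+X_{234}$ would demand $a_{126}=a_{234}$; the relation you actually obtain is $a_{\alpha(i_1)}=(-1)^{i_2-i_1-1}a_{\alpha(i_2)}$, which matches $\Pi_{\alpha_{rs}}$ only under the antisymmetric indexing convention $X_{\sigma(\alpha)}=\sgn(\sigma)X_\alpha$. This must be stated and checked, since with the wrong convention the identity you are aiming for is false. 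Second, your proposed shortcut is circular: the non-degeneracy of $L(n,2n)({\mathbb F}_q)$ in ${\mathbb P}(\ker f)$ is exactly what this lemma is being used to establish (it is the corollary that immediately follows it in the paper), so it cannot be invoked to prove the lemma. The implication itself is fine --- non-degeneracy does give $h|_{\ker f}=0$ and hence $h\in\mathrm{Ann}(\ker f)=\mathrm{span}(\Pi_{\alpha_{rs}})$ --- but it runs in the wrong logical direction here.
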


\begin{proof}
By induction on $n$, assume first that $n=2$. Then, 
$$I(2,4)=\{(1,2),(1,3),(1,4), (2,3),(2,4), (3,4)\}$$
and notice that 
$\{e_{12},e_{13},e_{24},e_{34}\}\subseteq L(2,4)({\mathbb F}_q)$ since they are totally decomposable and isotropic. 
Now, if $h\in (\cuna^2E)^*$ vanishes on $L(2,4)({\mathbb F}_q)$, then $L(2,4)({\mathbb F}_q)\subseteq Z(h,\Pi)$, where $\Pi=X_{14}+X_{23}$. Suppose that $h=A_{12}X_{12}+A_{13}X_{13}+A_{14}X_{14}+A_{23}X_{23}+A_{24}X_{24}+A_{34}X_{34}$. Since $h(L(2,4)({\mathbb F}_q))=0$, then $h=A_{14}X_{14}+A_{23}X_{23}$, and since $X_{14}+X_{23}=0$, it follows that $h=(A_{14}-A_{23})X_{14}$. By \cite{3.2}, $w=(1,0,1,0,1)\in L(2,4)({\mathbb F}_q)$ and thus $h(w)=(A_{14}-A_{23})1=0$, that is $A_{14}=A_{23}=:A$, and consequently $h=A(X_{14}+X_{23})=A\Pi$, as required. Our induction hypothesis is: For all $k<n$, every $h\in (\cuna^kE)^*$ such that $L(k,2k)({\mathbb F}_q)\subseteq Z\big(h,\Pi_{\alpha_{rs}}:\alpha_{rs}\in I(k-2,2k)\big)$, must be of the form $h=\sum A_{\alpha{rs}}\Pi_{\alpha_{rs}}$, for $\alpha_{rs}\in I(k-2,2k)$.
Assume now that $h\in (\cuna^nE)^*$ and $L(n,2n)({\mathbb F}_q)\subseteq Z\big(h,\Pi_{\alpha_{rs}}:\alpha_{rs}\in I(n-2,2n)\big)$. If $h\in\langle \Pi_{\alpha_{rs}}:\alpha_{rs}\in I(n-2,2n)\rangle$, we are done.
Otherwise, write $h=\sum_{\alpha\in I(n,2n)}A_{\alpha}X_{\alpha}$, with $A_{\overline{\alpha}}\neq 0$, where $\overline{\alpha}=(\overline{\alpha}_1,\ldots,\overline{\alpha}_n)\in I(n,2n)$.  The usual basis $e_{\alpha}$, for $\alpha\in I(n,2n)$, can be written as 
$${\mathcal B}=\{e_{(\beta,\overline{\alpha}_n)} : \beta\in I(n-1,2n-2),\; \overline{\alpha}_n\not\in\text{supp}( \beta)\}\cup\{e_{\alpha}:\alpha\in\Phi\}$$
where $\Phi=\{\beta\in I(n-1,2n-2):\overline{\alpha}_n\not\in\text{supp}(\beta)\}$. Then,
$$h=\sum_{(\beta,\overline{\alpha}_n)}A_{(\beta,\overline{\alpha}_n)}X_{(\beta,\overline{\alpha}_n)}+\sum_{\alpha\in\Phi}A_{\alpha}X_{\alpha}$$
with $\beta\in I(n-2,2n-2)$ y $\overline{\alpha}_n\not\in\text{supp}(\beta)$. Let
$$h'=\sum_{(\beta,\overline{\alpha}_n)}A_{(\beta,\overline{\alpha}_n)}X_{(\beta,\overline{\alpha}_n)}\quad\text{and}\quad h''=\sum_{\alpha\in\Phi}A_{\alpha}X_{\alpha},$$
where clearly $h',h''\in (\cuna^nE)^*$ and $h=h'+h''$.
 Let $\ell$ be the isotropic line generated by $e_{\overline{\alpha}_n}$ and let $U(\ell)=\{L\in L(n,2n):\ell\subseteq L\}$. We may identify $U(\ell)$ with the Lagrangian-Grassmannian $L(n-1,\ell^{\perp}/\ell)\simeq L(n-1,2n-2)$. Consider the map 
$$-\cunita e_{\overline{\alpha}_n}:\cuna^{n-1}E\rightarrow \cuna^nE$$
giving by wedging with $e_{\overline{\alpha}_n}$ on basis elements and then extending linearly. Composing this map with the contraction $f:\cuna^nE\rightarrow \cuna^{n-2}E$,  explicitely we have, for $\sum_{\beta\in (n-1,2n-2)}p_{\beta}e_{\beta}\in\cuna^{n-1}E$, 
\begin{align*}
\Big(\sum_{\beta\in I(n-1,2n-2)}p_{\beta}e_{\beta}\Big)\cunita e_{\overline{\alpha}_n}&=\sum_{\beta\in I(n-1,2n-2)}p_{(\beta,\overline{\alpha}_n)}(e_{\beta}\cunita e_{\overline{\alpha}_n})\\
&=\sum_{\beta\in I(n-1,2n-2)}p_{(\beta,\overline{\alpha}_n)}e_{(\beta,\overline{\alpha}_n)}=:w\in\cuna^nE,
\end{align*}
where the $\overline{\alpha}_n$ in the sums satisfy that $\overline{\alpha}_n\not\in \text{supp}(\beta)$. 
Appliying $f$ we obtain
\begin{align*}
f(w)&=\sum_{\beta\in I(n-1,2n-2)}p_{(\beta,\overline{\alpha}_n)}f(e_{(\beta,\overline{\alpha}_n)})\\
&=\sum_{\beta\in I(n-1,2n-2)}p_{(\beta,\overline{\alpha}_n)}\Big(\sum_{1\leq r<s\leq n}\langle e_{\alpha_r},e_{\alpha_s}\rangle e_{{(\beta,\overline{\alpha}_n)}_{rs}}\Big)\\
&= \sum_{1\leq r<s\leq n}\Big(\sum_{1\leq\varphi_1<\varphi_2\leq n}p_{(\beta,\overline{\alpha}_n)_{rs}\varphi_1\varphi_2}\langle e_{\varphi_1},e_{\varphi_2}\rangle\Big)e_{{(\beta,\overline{\alpha}_n)}_{rs}}.
\end{align*}

Now, if $w\in \text{Im}(-\cunita e_{\overline{\alpha}_n})\cap \ker f$, 
since the $e_{{(\beta,\overline{\alpha}_n)}_{rs}}$ are linearly independent, we must have that $\sum_{1\leq\varphi_1<\varphi_2\leq n}p_{((\beta,\overline{\alpha}_n)_{rs}\varphi_1\varphi_2)}\langle e_{\varphi_1},e_{\varphi_2}\rangle=0$, where $\langle e_{\varphi_1},e_{\varphi_2}\rangle=1$ if and only if $\varphi_1+\varphi_2=2n+1$, that is if $\varphi_1=i$, then $\varphi_2=2n-i+1$. Hence, the previous sum is $\sum_{i=1}^np_{(i,(\beta,\overline{\alpha}_n)_{rs},2n-i+1)}=0$ for all $\beta\in I(n-1,2n-2)$, again for $\overline{\alpha}_n\not\in\text{supp}(\beta)$, 
up to a permutation on the indexes. Therefore, $w$ satisfies the linear relations $\sum_{i=1}^nX_{(i,(\beta,\overline{\alpha}_n)_{rs},2n-i+1)}=0$, for all $\beta\in I(n-1,2n-2)$. 
Put $\Pi_{(\beta,\overline{\alpha}_n)_{rs}}:=\sum_{i=1}^nX_{(i,(\beta,\overline{\alpha}_n)_{rs},2n-i+1)}$. Clearly, $\{\Pi_{(\beta,\overline{\alpha}_n)_{rs}}:1\leq r<s\leq n,\beta\in I(n-1,2n-2)\}\subseteq \{\Pi_{\alpha_{rs}}:\alpha_{rs}\in I(n-2,2n)\}$,  
$$h'\big(L(n-1,\ell^{\perp}/\ell\big)({\mathbb F}_q)=h(U(\ell)({\mathbb F}_q))=0,$$
and by the induction hypothesis 
$$h'=\sum_{\beta\in I(n-1,2n-2)}A_{(\beta,\overline{\alpha}_n)}\Pi_{(\beta,\overline{\alpha}_n)}\in \langle \Pi_{\alpha_{rs}}:\alpha_{rs}\in I(n-2,2n)\rangle,$$
where $\overline{\alpha}_n\not\in\text{supp}(\beta)$ and 
$h'\neq 0$. Hence, appplying the same process to $h''$ we have that $h''=h'_1+h''_2$ with $h'_1\in\langle \Pi_{\alpha_{rs}}:\alpha_{rs}\in I(n-2,2n)\rangle$. This process must finish in a finite number of steps. 
\end{proof}
\begin{corollary}
$L(n,2n)({\mathbb F}_q)$ is an non-degenerate system in ${\mathbb P}(\ker f)({\mathbb F}_q)$ and the Lagrangian-Grassmannian $L(n,2n)$ is an \emph{FFN}$(1,q)$-projective variety in ${\mathbb P}(\ker f)$. 
\end{corollary}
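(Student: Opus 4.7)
The plan is to derive both statements directly from the Lemma, which identifies the homogeneous linear forms on ${\mathbb P}(\cuna^n E)$ vanishing on $L(n,2n)({\mathbb F}_q)$ as precisely the linear combinations of the forms $\Pi_{\alpha_{rs}}$, combined with \cite[Lemma 1]{3}, which gives $L(n,2n)=G(n,2n)\cap{\mathbb P}(\ker f)$, and with the description ${\mathbb P}(\ker f)=Z\langle \Pi_{\alpha_{rs}} : \alpha_{rs}\in I(n-2,2n)\rangle$ recalled before the Lemma.

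For non-degeneracy, I would argue by contradiction. Suppose $L(n,2n)({\mathbb F}_q)$ is contained in some hyperplane $H$ of ${\mathbb P}(\ker f)({\mathbb F}_q)$. Then $H$ is the intersection with ${\mathbb P}(\ker f)$ of a hyperplane $Z(h)\subseteq{\mathbb P}(\cuna^n E)$ for some nonzero linear form $h\in (\cuna^n E)^*$ with coefficients in ${\mathbb F}_q$ that does not lie in $\langle \Pi_{\alpha_{rs}}\rangle$ (otherwise $Z(h)$ would contain all of ${\mathbb P}(\ker f)$, so $H$ would not be a proper hyperplane of ${\mathbb P}(\ker f)$). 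Since $h$ vanishes on $L(n,2n)({\mathbb F}_q)$, the Lemma forces $h\in\langle \Pi_{\alpha_{rs}}\rangle$, a contradiction. Hence no such hyperplane $H$ exists, and $L(n,2n)({\mathbb F}_q)$ is non-degenerate in ${\mathbb P}(\ker f)({\mathbb F}_q)$.

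For the \emph{FFN}$(1,q)$-property, let $h$ be a homogeneous linear form with coefficients in ${\mathbb F}_q$ vanishing on $L(n,2n)({\mathbb F}_q)$. By the Lemma, $h=\sum A_{\alpha_{rs}}\Pi_{\alpha_{rs}}$ for suitable scalars. Since each $\Pi_{\alpha_{rs}}$ vanishes identically on ${\mathbb P}(\ker f)$, and since $L(n,2n)\subseteq{\mathbb P}(\ker f)$, each $\Pi_{\alpha_{rs}}$ vanishes on the whole variety $L(n,2n)$, hence so does $h$. This is exactly the \emph{FFN}$(1,q)$ condition for $L(n,2n)$ viewed as a subvariety of ${\mathbb P}(\ker f)$.

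The only delicate point is conceptual rather than computational: one must be careful to distinguish linear forms on the ambient ${\mathbb P}(\cuna^n E)$ from linear forms on the linear subspace ${\mathbb P}(\ker f)$, noting that a hyperplane of ${\mathbb P}(\ker f)$ corresponds to a class of linear forms on ${\mathbb P}(\cuna^n E)$ modulo $\langle \Pi_{\alpha_{rs}}\rangle$. Once this identification is made, the Lemma does all the work, and no new estimates or constructions are required.
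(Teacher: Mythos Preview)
Your proposal is correct and is exactly the intended argument: the paper states this corollary without proof immediately after the Lemma, and your derivation---using that every ${\mathbb F}_q$-linear form vanishing on $L(n,2n)({\mathbb F}_q)$ lies in $\langle \Pi_{\alpha_{rs}}\rangle$, together with ${\mathbb P}(\ker f)=Z\langle \Pi_{\alpha_{rs}}\rangle$---is the natural unwinding of that implication. Your closing remark about identifying linear forms on ${\mathbb P}(\ker f)$ with classes modulo $\langle \Pi_{\alpha_{rs}}\rangle$ is the right way to handle the only possible subtlety.
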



We denote by
$C_{L(n,2n)}$  the $[n,k]_q$ nondegenerate linear code   induced by
the projective system $L(n,2n)({\mathbb F}_q)$. Here $n=\prod_{i=1}^n (1+q^i)$,   and $\; k=\binom
{2n}{n}-\rank B$, where $B$ is the matrix associated to the homogeneous
system of linear equations $\{\Pi_{\alpha_{rs}}: \alpha_{rs} \in
I(n-2, 2n) \}$.  A detailed description of $B$ and $\rank B$ is in \cite[Sections 3 and 4]{3.3}. 
For the minimum distance $d=d(L(n,2n)$ we have the bound $d< q^{\frac{n(n+1)}{2}}$, see \cite{3}. 
For some low dimension Lagrangian-Grassmannian codes their weight spectra have been completely determined, for example, for the Lagrangian-Grassmannian $C_{L(2,4)}$ code, by \cite{3.1} and \cite{3.2}, see also \cite{2.1}, and for the Lagrangian-Grassmannian $C_{L(3,6)}$ code in \cite{2.1}.
\end{example}

\begin{example}[Isotropic Grassmannians]\label{example3.4}
The set-up is the same as in Example \ref{example3.3}, that is, $E$ is a symplectic vector space of dimension $2n$ over a finite field ${\mathbb F}_q$ and $\overline{E}=E\otimes_{{\mathbb F}_q}\overline{\mathbb F}_q$. For any integer $1\leq \ell\leq n$, let $IG(\ell,2n)\subseteq G(\ell,2n)$ be the set of $k$-dimensional isotropic vector subspaces of $\overline{E}$. This is projective subvariety of ${\mathbb P}\big(\cuna^{\ell}\overline{E}\big)$, by means of the Pl\"ucker embedding.
The isotropic Grassmannian $IG(\ell,2n)$ is a  section
$$IG(\ell,2n)=G(\ell,2n)\cap L,$$
by a linear subspace $L\subseteq{\mathbb P}\big(\cuna^{\ell}\overline{E}\big)$ of codimension $\binom{2n}{\ell-2}$. Hence, its dimension is
$$\dim IG(\ell,2n)=\binom{2n}{\ell}-\binom{2n}{\ell-2}.$$
Also, the cardinality of its set of ${\mathbb F}_q$-rational points is
$$|IG(\ell,2n)({\mathbb F}_q)|=\prod_{i=0}^{\ell-1}\frac{q^{2n-2i}-1}{q^{i+1}-1}.$$
By Proposition \ref{prop2.3}, $IG(\ell,2n)({\mathbb F}_q)$ is a projective system in ${\mathbb P}(\cuna^{\ell}E)$. Thus, as in Definition \ref{def2.1}, it has an associated $[n,k]_q$-linear code $C_{IG(\ell,2n)}$ with parameters $n=|IG(\ell,2n)({\mathbb F}_q)|$ and $k=\binom{2n}{\ell}$. This family of codes was introduced and studied in  \cite{2.1}.
\end{example}
\begin{example}[Lagrangian-Schubert codes]\label{example3.5}
For the Schubert codes, in \cite{27}, \cite{11}, \cite{6} and \cite{29} their parameters are obtained by using a flag of vector spaces in the projective space ${\mathbb P}(\cuna^nE)$. Using similar ideas we look at a new code associated to a Schubert variety over a symplectic vector space. Again, let ${\mathbb F}_q$ be a finite field, $\overline{\mathbb F}_q$ an algebraic closure, and $E$ an ${\mathbb F}_q$-symplectic vector space of dimension $2n$. Let $L(n,2n)$ be the Lagrangian-Grassmannian variety defined in Example
\ref{example3.3}. Fix a flag of isotropic subspaces of $E$:
$$ 0\subset W_1\subset W_2\subset\cdots\subset W_n\subset E\leqno(*)$$
such that $\dim W_i=i$ for $1\leq i\leq n$. One such flag will be called an \textit{isotropic flag} of $E$. Notice that
since $W_n$ is  isotropic of dimension $n$, then $W_n\in L(n,2n)$. Thus, an isotropic flag of $E$ is just a complete flag of $W_n$. Observe that each isotropic flag of $E$ can be extended to a complete flag of $E$ by setting $W_{n+i}=W_{n-i}^{\perp}$, for $1\leq i\leq n$. Now, for a partition $\lambda=(\lambda_1,\ldots,\lambda_n)\in I(n,2n)$ and an isotropic flag $(\ast)$ of $E$, the \emph{Lagrangian-Schubert} variety is the set
$$L(n,2n)_{\lambda}:=\{W\in L(n,2n)\, :\, \dim(W\cap W_{n+1-\lambda_i})\geq i,\; 1\leq i\leq \ell(\lambda)\}$$
where $\ell(\lambda)=|\{p\in\{1,\ldots,n\}\, :\, \lambda_p\neq 0|$. $L(n,2n)_{\lambda}$ is
a subvariety of $L(n,2n)$ of codimension $|\lambda|:=\sum_{p=1}^{\ell}\lambda_p$ in ${\mathbb P}(\cuna^nE)$. Now, for
a partition $\lambda\in I(n,2n)$ and an isotropic flag $(\ast)$ of $E$, consider the subvarieties $L(n,2n)$ and
$\Omega_{\lambda}(n,2n)$. Then, $L(n,2n)_{\lambda}=L(n,2n)\cap \Omega_{\lambda}(n,2n)$. 
For the set of ${\mathbb F}_q$-rational points $L(n,2n)_{\lambda}({\mathbb F}_q)$, by \cite[Lemma 2.2]{3.1}, there exists an irreducible projective subvariety $Z\subseteq L(n,2n)_{\lambda}$ such that $L(n,2n)_{\lambda}({\mathbb F}_q)=Z({\mathbb F}_q)$  and $Z$ is an FFN$(1,q)$-variety. Therefore, by \cite[Corollaries 3 and 4]{3.1},  $L(n,2n)_{\lambda}({\mathbb F}_q)$ defines a linear code $C_{L(n,2n)_{\lambda}}$ whose parameters are given as follows:
let $r=r(\lambda)=|L(n,2n)_{\lambda}|$ be the number of rational points of $L(n,2n)_{\lambda}$ over the finite field ${\mathbb F}_q$. Write $L(n,2n)_{\lambda}({\mathbb F}_q)=\{P_1,\ldots,P_r\}$ with the $P_i$ representatives of the corresponding points in ${\mathbb P}(\cuna^nE)$, under the Pl\"ucker embedding. Let $K=\{h\in (\cuna^nE)^*\, :\, h(P_1)=\cdots=h(P_r)=0\}$ and $V=\{w\in\cuna^nE\, :\, h(w)=0\;\;\text{for all}\;\; h\in K\}$ as in the beginning of this Section. Then, by \cite[Section 3.2]{ikeda}, the lenght of the code $C_{L(n,2n)_{\lambda}}$ is
 $\big|L(n,2n)_{\lambda}({\mathbb
F}_q)\big|=\sum_{\beta\not\leq\gamma}q^{\delta_{\beta}}$, where $\delta_{\beta}$ is the dimension of the affine space isomorphic to the corresponding Schubert cell. 
The dimension of $C_{L(n,2n)_{\lambda}}$ is  $k=\binom{2n}{n}-\dim B$, where $B$ is the matrix associated to the system of linear equations $Z(\Pi_{\alpha_{rs}},X_{\beta}: \alpha_{rs}\in I(n-2,2n), \beta\not\leq\gamma)$.
The minimum distance $d$ of $C_{L(n,2n)_{\lambda}}$ satisfies the bound $d\leq q^{\dim L(n,2n)_{\lambda}}$, by  \cite[Corollary 4]{3.1}.
\end{example}

\begin{example}[Lagrangian-Schubert union codes]
For $\lambda_i\in I(n,2n)$, $1\leq i\leq r$, 
let $L_U=\bigcup_{i=1}^r L(n,2n)_{\lambda_i}$ and $S_U=\bigcup_{i=1}^r\Omega_{\lambda_i}$. Let $H_U=I(n,2n)-G_U$, for $G_U$ as in Example \ref{example3.2}. Then,
\begin{align*}
L_U&=G(n,2n)\cap Z(\Pi_{\alpha_{rs}}, X_{\beta}: \alpha_{rs}\in I(n-2,2n), \beta\in H_U)=S_U\cap{\mathbb P}(\ker f)\\
&=L(n,2n)\cap Z(X_{\beta}:\beta\in H_U).
\end{align*}
Now, as in Example \ref{example3.5}, for the set of ${\mathbb F}_q$-rational points $L_U({\mathbb F}_q)$, there exists an irreducible FFN$(1,q)$-subvariety $Y\subseteq L_U$ such that $Y({\mathbb F}_q)=L_U({\mathbb F}_q)$ and a corresponding linear code $C_{L_U}$ whose parameters satisfy:
 $n=\big|L_U({\mathbb F}_q)\big|=\sum_{\beta\in H_U}q^{\delta_{\beta}}$, 
$k=\binom{2n}{n}-\dim B$, where $B$ is the matrix associated to the system of linear equations $Z(\Pi_{\alpha_{rs}},X_{\beta}: \alpha_{rs}\in I(n-2,2n), \beta\in H_U)$.
The minimum distance $d$ of $C_{L_U}$ satisfies the bound $d\leq q^{\dim L_U}$.
\end{example}

\section{Higher weights of the Lagrangian-Grassmannian codes}\label{sec4}

Now, we address the question of finding bounds for the higher weights of the La\-gran\-gian-Grass\-mannian code and to do this we use what is known, \cite{5}, \cite{6} and \cite{17},  for the higher weights $d_r(C(n,2n))$ of the linear code $C(n,2n)$ associated to the ${\mathbb F}_q$-rational points $G(n,2n)({\mathbb F}_q)$ of the Grassmannian. By
\cite[Thm. 4]{6}, 
\begin{align*}
d_r(C(n,2n))&\geq q^{\delta}+\cdots+q^{\delta-r+1},\; \text{where $\delta=n(2n-n)=n^2$},\\
d_r(C(n,2n))&=q^{\delta}+\cdots+q^{\delta-r+1},\;  \text{if $1\leq r\leq \max\{n,2n-n+1\}=n+1$}.
\end{align*}
Now for the higher weights $d_r(C_{L(n,2n)})$ of the Lagrangian-Grassmannian code, with the notation of Example \ref{example3.3}, suppose $H$ is a codimension $r$ linear subvariety of ${\mathbb P}(V)$. Then,  $H$ has codimension $r'$ in ${\mathbb P}(\wedge^nE)$, for $r'>r$. Observe now that
$$|L(n,2n)({\mathbb F}_q)\cap H|=|G(n,2n)\cap {\mathbb P}(V)\cap H|=|G(n,2n)({\mathbb F}_q)\cap H|$$
and thus, for $H\subseteq {\mathbb P}(V)$:
\begin{align*}
\max_{\codim H=r}\{|L(n,2n)({\mathbb F}_q)\cap H|\}
&= \max_{H\subseteq {\mathbb P}(V)}\{|G(n,2n)({\mathbb F}_q)\cap H| : \codim H=r\}\\
&\leq \max_{H\subseteq {\mathbb P}(\wedge^nE)}\{|G(n,2n)({\mathbb F}_q)\cap H| : \codim H=r'\}.
\end{align*}
Therefore,
$$
|L(n,2n)({\mathbb F}_q)|-\max_{H\subseteq {\mathbb P}(\wedge^nE)}\{|G(n,2n)({\mathbb F}_q)\cap H| :\codim H=r'\}\qquad\qquad\qquad\qquad\qquad\qquad$$
$$\qquad\qquad\qquad\qquad \leq |L(n,2n)({\mathbb F}_q)|-\max_{H\subseteq {\mathbb P}(V)}\{|L(n,2n)({\mathbb F}_q)\cap H| : \codim H=r\}.
$$
Hence,
$$
|L(n,2n)({\mathbb F}_q)|-|G(n,2n)({\mathbb F}_q)|+\Big( |G(n,2n)({\mathbb F}_q)|-\max_{H\subseteq {\mathbb P}(\wedge^nE)}\{|G(n,2n)({\mathbb F}_q)\cap H|\}\Big)
$$
$$\qquad\qquad\qquad\qquad \qquad\qquad\qquad\qquad\leq d_r(C_{L(n,2n)}),$$
where the maximum is taken over all linear subvarieties $H\subseteq {\mathbb P}(\wedge^nE)$ such that $\codim H=r'$. We have proved: 
\begin{proposition}\label{prop11}\label{proposition13}
With the notation above,
\begin{equation*}
d_r(C_{L(n,2n)})\geq |(L(n,2n)({\mathbb F}_q)|-|G(n,2n)({\mathbb F}_q)|+d_{r'}(C(n,2n))\end{equation*}
and
\begin{align*}
|L(n,2n)({\mathbb F}_q)|-|G(n,2n)({\mathbb F}_q)|+d_{r'}(C(n,2n))
&\leq d_r(C_{L(n,2n)})\\
&\leq |L(n,2n)({\mathbb F}_q)|-\dim V+r, 
\end{align*}
where $r'=\binom{2n}{n}-\dim V+r$ and
$$ |G(n,2n)({\mathbb F}_q)|=
\displaystyle\begin{bmatrix}
2n\\
n
\end{bmatrix}_q
=\frac{(q^{2n}-1)(q^{2n}-q)\cdots (q^{2n}-q^{n-1})}{(q^n-1)(q^n-q)\cdots (q^n-q^{n-1})}.$$
\end{proposition}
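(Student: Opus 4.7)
The plan is to complete the proof by carefully formalizing the chain of inequalities already sketched before the statement (which yields the lower bound) and then separately deriving the upper bound via a Singleton-type inequality for higher weights.

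For the lower bound, I would first nail down the codimension bookkeeping. Since $\mathbb{P}(V)$ has projective dimension $\dim V - 1$ and $\mathbb{P}(\wedge^{n}E)$ has projective dimension $\binom{2n}{n}-1$, a codimension $r$ linear subvariety $H\subseteq\mathbb{P}(V)$ has codimension
$$r'=\Big(\binom{2n}{n}-1\Big)-(\dim V - 1 - r)=\binom{2n}{n}-\dim V + r$$
when viewed as a subvariety of $\mathbb{P}(\wedge^{n}E)$. This is consistent because $\mathbb{P}(V)=\mathbb{P}(\ker f)$ is itself cut out by a family of $\binom{2n}{n}-\dim V$ independent linear forms (the $\Pi_{\alpha_{rs}}$ of Example \ref{example3.3}), so every such $H$ arises as a codimension $r'$ linear subvariety of the ambient projective space intersected with $\mathbb{P}(V)$. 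Next I record the intersection identity $L(n,2n)(\mathbb{F}_q)\cap H = G(n,2n)(\mathbb{F}_q)\cap H$, valid for any $H\subseteq\mathbb{P}(V)$, which follows from $L(n,2n)=G(n,2n)\cap\mathbb{P}(V)$ together with $H\subseteq\mathbb{P}(V)$. Taking the maximum over admissible $H\subseteq\mathbb{P}(V)$ and bounding it by the maximum over all codimension $r'$ subvarieties of $\mathbb{P}(\wedge^{n}E)$, then invoking the definition of $d_{r'}(C(n,2n))$, gives the stated lower bound after rearrangement; this is precisely the computation displayed just above the statement.

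For the upper bound, I would invoke the generalized Singleton bound for higher weights: every nondegenerate $[N,K]_q$-linear code satisfies $d_r\leq N-K+r$. By the corollary preceding Example \ref{example3.4}, $L(n,2n)(\mathbb{F}_q)$ is a nondegenerate projective system in $\mathbb{P}(V)$, so by Corollary \ref{corollary5} the associated code $C_{L(n,2n)}$ has length $N=|L(n,2n)(\mathbb{F}_q)|$ and dimension $K=\dim V$. Substituting gives
$$d_r(C_{L(n,2n)})\leq |L(n,2n)(\mathbb{F}_q)|-\dim V + r,$$
as required. The formula for $|G(n,2n)(\mathbb{F}_q)|$ is the standard Gaussian binomial coefficient and requires no proof.

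The only delicate point is the codimension accounting, and in particular verifying that no loss occurs when passing between ambient projective spaces; everything else is essentially a repackaging of what appears in the body of the section.
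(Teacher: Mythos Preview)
Your proposal is correct and follows essentially the same route as the paper: the lower bound is exactly the chain of inequalities displayed in the text preceding the proposition (intersection identity, enlarging the family of admissible $H$ from $\mathbb{P}(V)$ to $\mathbb{P}(\wedge^nE)$, and rearranging), and the upper bound is the generalized Singleton inequality of Wei applied to the nondegenerate $[|L(n,2n)(\mathbb{F}_q)|,\dim V]_q$-code $C_{L(n,2n)}$. Your explicit computation of $r'$ simply makes precise what the paper records in the statement of the proposition.
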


We just note that the second bound is obtained from the generalized Singleton bound \cite{25}.

Now, following   \cite[Section 5]{5}, fix a set $T(n,2n)=\{w_1,\ldots, w_t\}$ of representatives in $\cuna^nE$ corresponding to points in $G(n,2n)({\mathbb F}_q)$. 
Given a subspace $S$ of $\cuna^nE$, we put $g(S)=|S\cap T(n,2n)|$ and let
$$g_r(n,2n)=\max\{g(S)\; :\; \text{$S$ is a codimension $r$ subspace of $\textstyle\cuna^nE$}\}.$$

For $f:\cuna^nE\rightarrow \cuna^{n-2}E$ as in Section \ref{sec2}, let $V=\ker f$. Then, $|L(n,2n)({\mathbb F}_q)|=g(V)$ and it is immediate that $g(V)\leq g_r(n,2n)$ for $r=\binom{2n}{n}-\dim V$. Therefore, from  \cite[Corollary 17]{5} we obtain
\begin{align*}
d_r(C(n,2n))&=|G(n,2n)({\mathbb F}_q)|-g_r(n,2n)\\
&\leq |G(n,2n)({\mathbb F}_q)|-g(V)=|G(n,2n)({\mathbb F}_q)|-|L(n,2n)({\mathbb F}_q)|.
\end{align*}
We have proved:

\begin{proposition}\label{proposition14}
If $1\leq r\leq \binom{2n}{n}$, then 
$$d_r(C(n,2n))\leq |G(n,2n)({\mathbb F}_q)|-|L(n,2n)({\mathbb F}_q)|.$$
\end{proposition}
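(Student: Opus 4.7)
The strategy is to combine the identification $L(n,2n) = G(n,2n)\cap {\mathbb P}(\ker f)$ from Example \ref{example3.3} with the Ghorpade--Tsfasman formula for the generalized Hamming weights of the Grassmann code, \cite[Corollary 17]{5}. All of the auxiliary setup ($T(n,2n)$, $g(S)$, $g_r(n,2n)$, and the subspace $V = \ker f$) is already in place in the paragraph immediately preceding the statement, so the proof proposal is really a matter of applying the inequality $g(V) \leq g_r(n,2n)$ to the right subspace and the right value of $r$.

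Concretely, I would fix $V = \ker f$ and let $r_0 = \binom{2n}{n}-\dim V$ be its codimension in $\cuna^nE$. The key identity $|L(n,2n)({\mathbb F}_q)| = g(V)$ is essentially immediate: a representative $w \in T(n,2n)$ lies in $V$ exactly when its projective class lies in ${\mathbb P}(V)$, and these classes are precisely the rational points of $G(n,2n)\cap {\mathbb P}(V) = L(n,2n)$. Since $V$ is itself a codimension-$r_0$ subspace, the definition of $g_{r_0}(n,2n)$ as a maximum over all codimension-$r_0$ subspaces gives $g(V) \leq g_{r_0}(n,2n)$.

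Applying \cite[Corollary 17]{5} at $r = r_0$ then yields
$$d_{r_0}(C(n,2n)) = |G(n,2n)({\mathbb F}_q)| - g_{r_0}(n,2n) \leq |G(n,2n)({\mathbb F}_q)| - g(V) = |G(n,2n)({\mathbb F}_q)| - |L(n,2n)({\mathbb F}_q)|,$$
which is the desired bound at $r = r_0$. To extend to all smaller $r$ one invokes the standard monotonicity $d_r \leq d_{r_0}$ of the generalized Hamming weights of any linear code.

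The step warranting the most care is the upper end of the range of $r$ appearing in the statement. The argument above cleanly supplies the bound for $r \leq r_0 = \codim V$; for $r > r_0$, the generalized Singleton-type behavior forces $d_r$ to eventually exceed $|G(n,2n)({\mathbb F}_q)| - |L(n,2n)({\mathbb F}_q)|$, so the statement is most naturally read as implicitly restricted to $r \leq r_0$. Modulo this range clarification there is no real technical obstacle: the proof is a one-line specialization of the Ghorpade--Tsfasman weight formula to the subspace $V = \ker f$ cutting out the Lagrangian-Grassmannian.
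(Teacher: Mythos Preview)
Your proposal is correct and follows essentially the same route as the paper: identify $V=\ker f$, use $|L(n,2n)({\mathbb F}_q)|=g(V)\leq g_{r_0}(n,2n)$ for $r_0=\binom{2n}{n}-\dim V$, and plug into \cite[Corollary 17]{5}. Your added remarks---the monotonicity step extending the bound to all $r\leq r_0$, and the caveat that the inequality cannot hold throughout the full range $1\leq r\leq \binom{2n}{n}$ stated in the proposition (since, e.g., $d_{\binom{2n}{n}}(C(n,2n))=|G(n,2n)({\mathbb F}_q)|$)---are a genuine sharpening of the paper's argument, which only treats the single value $r=r_0$.
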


Again, following \cite{5}, we address the problem of determining the maximum number of points on linear sections $H\cap L(n,2n)({\mathbb F}_q)$ of the Lagrangian-Grassmannian, for linear subvarieties $H\subseteq {\mathbb P}(\cuna^nE)$ of codimension $r$. This problem can be translated to the same problem for the Grassmann variety as follows: Let $H$ be a codimension $r$ linear subvariety of the projective space ${\mathbb P}(V)=Z\langle g_1,\ldots,g_N\rangle$. We want to calculate the number of points in the intersection $H\cap L(n,2n)({\mathbb F}_q)$. Now, since $H$ is a codimension $r$ linear subvariety of ${\mathbb P}(V)$, then $H=Z\langle g_1,\ldots,g_N,h_1,\ldots,h_t\rangle$ is a linear subvariety of ${\mathbb P}(\cuna^nE)$ of codimension $r'>r$, where $\{h_1,\ldots, h_t\}$ is a set of linear homogeneous polynomials. Then, the problem we are addressing could be translated to the problem of finding the number of points of the intersection of Grassmannian $G(n,2n)({\mathbb F}_q)$ and the codimension $r'$ linear subvariety of ${\mathbb P}(\cuna^nE)$. If the linear subvariety of ${\mathbb P}(V)$ is of the form $H_{\Lambda}=Z\langle g_1,\ldots,g_N,x_{\alpha}\, : \, \text{for all $\alpha\in \Lambda$}\rangle$, where $\Lambda\subseteq I(n,2n)$ is a \textit{close} family (see \cite{5}) with $k$ elements, we have an upper bound for the number of points in the intersection $H_{\Lambda}\cap L(n,2n)({\mathbb F}_q)$. Indeed, if $H_{\Lambda}'=Z\langle x_{\alpha}\, :\, \text{for all $\alpha\in \Lambda$}\rangle$ is the corresponding linear subvariety of ${\mathbb P}(\cuna^nE)$, then
$$H_{\Lambda}\cap L(n,2n)({\mathbb F}_q)=H_{\Lambda}'\cap G(n,2n)({\mathbb F}_q)\cap {\mathbb P}(V)\subseteq H_{\Lambda}'\cap G(n,2n)({\mathbb F}_q)$$
and using  \cite[3.2 ]{5}, we have
$$|H_{\Lambda}'\cap G(n,2n)({\mathbb F}_q)|=\begin{bmatrix}
2n\\
n
\end{bmatrix}_q-q^{n^2}-q^{n^2-1}-\cdots -q^{n^2-k+1}.$$
We have proved that:

\begin{proposition}\label{proposition15}
If $\Lambda$ is a close family of $I(n,2n)$ with $k$ elements, then
$$|H_{\Lambda}\cap L(n,2n)({\mathbb F}_q)|\leq \begin{bmatrix}
2n\\
n
\end{bmatrix}_q-q^{n^2}-q^{n^2-1}-\cdots -q^{n^2-k+1}.$$
\end{proposition}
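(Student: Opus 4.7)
The plan is to reduce the count $|H_{\Lambda}\cap L(n,2n)({\mathbb F}_q)|$ to a count for the full Grassmannian that is already known, and then use the inclusion ${\mathbb P}(V)\subseteq {\mathbb P}(\cuna^n E)$ to pass from equality to inequality. The argument is essentially the one telegraphed in the paragraph preceding the proposition, and my proof would just assemble those observations carefully.

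First, I would recall, by \cite[Lemma 1]{3} as used in Example \ref{example3.3}, that $L(n,2n)=G(n,2n)\cap {\mathbb P}(V)$ with $V=\ker f=Z\langle g_1,\ldots,g_N\rangle$. By the very definition given in the text,
$$H_{\Lambda}=Z\langle g_1,\ldots,g_N,\,x_{\alpha}:\alpha\in\Lambda\rangle\subseteq {\mathbb P}(V),$$
so if I set $H'_{\Lambda}=Z\langle x_{\alpha}:\alpha\in\Lambda\rangle\subseteq {\mathbb P}(\cuna^n E)$, I immediately get the set-theoretic identity $H_{\Lambda}=H'_{\Lambda}\cap {\mathbb P}(V)$. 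Intersecting with $L(n,2n)({\mathbb F}_q)$ then gives
$$H_{\Lambda}\cap L(n,2n)({\mathbb F}_q)=H'_{\Lambda}\cap G(n,2n)({\mathbb F}_q)\cap {\mathbb P}(V)({\mathbb F}_q)\subseteq H'_{\Lambda}\cap G(n,2n)({\mathbb F}_q),$$
where the equality uses $L(n,2n)=G(n,2n)\cap {\mathbb P}(V)$ and the inclusion is just dropping the last factor.

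Second, since $\Lambda\subseteq I(n,2n)$ is a close family with $|\Lambda|=k$, I would invoke the Ghorpade--Lachaud count for closed linear sections of the Grassmannian \cite[3.2]{5}, which yields
$$|H'_{\Lambda}\cap G(n,2n)({\mathbb F}_q)|=\begin{bmatrix}2n\\ n\end{bmatrix}_q-q^{n^2}-q^{n^2-1}-\cdots-q^{n^2-k+1},$$
where $n^2=\ell(m-\ell)$ with $\ell=n$ and $m=2n$ is the dimension of $G(n,2n)$ playing the role of the exponent $\delta$ in Definition \ref{def2.1}. Combining this equality with the preceding inclusion yields the desired upper bound.

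The only real point requiring care is not the assembly above but the verification that the cited formula from \cite{5} applies verbatim in the symplectic setting: that the closeness hypothesis on $\Lambda$ is precisely what is needed so that the radical vanishing ideal of $H'_{\Lambda}\cap G(n,2n)$ gives the stated cardinality with $\delta=n^2$ and $k=|\Lambda|$. Once that is checked, the rest is immediate and no further calculation is needed.
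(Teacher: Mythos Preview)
Your proposal is correct and follows exactly the paper's own argument: it uses the identity $H_{\Lambda}\cap L(n,2n)({\mathbb F}_q)=H'_{\Lambda}\cap G(n,2n)({\mathbb F}_q)\cap {\mathbb P}(V)\subseteq H'_{\Lambda}\cap G(n,2n)({\mathbb F}_q)$ and then invokes \cite[3.2]{5} for the cardinality of the right-hand side. There is nothing to add; the paper's proof is precisely the paragraph you paraphrased.
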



\bibliographystyle{model1-num-names}

\begin{thebibliography}{00}

\bibitem{0} E. Ballico and A. Cossidente,  On the Finite Field Nullstellensatz. \emph{Australasian J. Combinatorics} \textbf{21} (2000)  57--60.

\bibitem{0.1} E. Ballico and A. Cossidente, Finite Field Nullstellensatz and Grassmannians. \emph{Australasian J. Combinatorics} \textbf{24} (2001)  313--315.

\bibitem{2.1} Cardinali, I. and L. Giuzzi, Minimun Distance of Symplectic Grassmann Codes. \emph{Linear Alg. and its Applic.} \textbf{488} (2016) 124--134.

\bibitem{3} J. Carrillo-Pacheco and F. Zaldivar,  On Lagrangian-Grassmannian Codes, \emph{Designs, Codes and Criptography}  \textbf{60} (2011) 291--268.

\bibitem{3.2} J. Carrillo-Pacheco, G. Vega and F. Zaldivar,  The Weight Distribution of a Family of Lagrangian-Grassmannian Codes, in  \emph{C2SI 2015. El Hajji et al. (Eds.)} Lect. Notes Comp. Sci. \textbf{9084}  (Springer Verlag, 2015) 240--246.

\bibitem{3.1} J. Carrillo-Pacheco and F. Zaldivar,  On Codes over FFN$(1,q)$-projective varieties, \emph{Advances in Mathematics of Communications}, \textbf{10} (2016) 209--220.

\bibitem{3.3} J. Carrillo-Pacheco, F. Jarqu\'{\i}n-Z\'arate, M. Velasco-Fuentes and F. Zaldivar,  An Explicit Description in Terms of Pl\"ucker Coordinates of the Lagrangian-Grassmannian. Submitted (2016).  Preprint: arXiv:1601.07501.

\bibitem{27} H. Chen,  On the Minimum Distance of Schubert Codes,    \emph{IEEE Trans. Inform. Theory} \textbf{46} (2000) 1535--1538.

\bibitem{4} W. Fulton, \emph{Young Tableaux, with Applications to
Representation Theory and Geometry}  (Cambridge University Press, 1997).

\bibitem{5} S. R. Ghorpade and G. Lachaud,  Higher Weights of
Grassmann Codes, in \emph{Coding Theory, Cryptography and Related Areas} (Springer-Verlag, 2000) 122--131,

\bibitem{6} S. R. Ghorpade and G. Lachaud,  Hyperplane sections of Grassmannians and the number of MDS linear codes, \emph{Finite Fields and their Applications}   \textbf{7} (2001)  468--506.

\bibitem{10} G.-M. Hana,  Schubert unions and codes from $\ell$-step flag varieties, in \emph{Arithmetic, Geometry,  and Coding Theory}, (Luminy, 2005), S\'eminaires et Congr\`es {\bf 21}, Soc. Math. France, Paris (2009) 43--61.

\bibitem{11} J. P.  Hansen, T. Johnsen, and K. Ranestad,   Schubert unions in Grassmann varieties, \emph{Finite Fields and their Applications}   \textbf{13} (2007)   738--750.

\bibitem{12} J. P.  Hansen, T. Johnsen, and K. Ranestad,  Grassmann codes and Schubert unions, in \emph{Arithmetic, Geometry,  and Coding Theory}, (Luminy, 2005), S\'eminaires et Congr\`es {\bf 21}, Soc. Math. France, Paris (2009), 103--121.

\bibitem{ikeda} T. Ikeda, Schubert Classes in the Equivariant Cohomology of the Lagrangian Grassmannian, \emph{Adv. Math.} \textbf{215} (2007) 1--23.

\bibitem{17} D. Yu Nogin,  Codes associated to Grassmannians, in
\emph{Arithmetic, Geometry and Coding Theory} (Luminy 1993), Walter de
Gruyter,  1996)  145--154.

\bibitem{17.1} D. Yu Nogin,  Spectrum of Codes associated with the  Grassmannian $G(3,6)$, \emph{Problems of Information Transmission}, \textbf{33} No. 2 (1997) 114--123.

\bibitem{18} F. Rodier,  Codes from flag varieties over a finite field, \emph{J. Pure Apppl. Algebra}. \textbf{178} (2003) 203--214.

\bibitem{19} C. T. Ryan,  An application of
Grassmannian varieties to coding theory, \emph{Congr. Numer}. \textbf{57} (1987), 257-271.

\bibitem{20} C. T. Ryan,  Projective codes based on Grassmann
varieties, \emph{Congr. Num.} \textbf{57} (1987) 273--279.

\bibitem{21} C. T. Ryan and K. M. Ryan,  The minimun weight of
Grassmannian codes $C(k,n)$, \emph{Disc. Appl. Math}. \textbf{28} (1990) 149--156.

\bibitem{23} M. A. Tsfasman and S. G. Vladut,  Geometric approach to
higher weights, \emph{IEEE Trans. Inform. Theory} \textbf{41} (1995) 1564--1588.

\bibitem{30} M.A. Tsfasman, S. G. Vladut and D. Nogin,  \emph{Algebraic Geometric
Codes: Basic Notions} (American Mathematical Society, 2007).

\bibitem{25} V. K. Wei,  Generalized Hamming Weights for Linear Codes,    \emph{IEEE Trans. Inform. Theory} \textbf{37} (1991) 1412--1418.

\bibitem{29} X. Xiang,  On the Minimun Distance Conjecture for Schubert Codes,    \emph{IEEE Trans. Inform. Theory} \textbf{54} (2008) 486--488.

\end{thebibliography}

\end{document}